\newcommand{\ben}{\begin{enumerate}}
\newcommand{\een}{\end{enumerate}}
\newcommand{\eq}[2][label]{\begin{equation}\label{#1}#2\end{equation}}
\newcommand{\av}[2]{\langle #1\rangle_{_{\scriptstyle #2}}}
\newcommand{\ave}[1]{\langle #1\rangle}
\newcommand{\sav}[2]{\langle #1\rangle_{_{\scriptscriptstyle #2}}}
\DeclareMathOperator\sign{sgn}
\newcommand{\ve}{\varepsilon}
\newcommand{\cre}[1]{{\color{black}#1}}
\newcommand{\bel}[1]{\boldsymbol{#1}}
\newcommand{\ma}{Monge--Amp\`{e}re }
\newcommand{\BMO}{{\rm BMO}}
\newcommand{\overbar}[1]{\mkern 1.5mu\overline{\mkern-1.5mu#1\mkern-1.5mu}\mkern 1.5mu}
\newcommand{\acr}{\overbar{v}}
\newcommand{\bcr}{\overbar{w}}
\newcommand{\xicr}{1-\frac1{3^{p+2}\Gamma(p)}}
\newtheorem{theorem}{Theorem}[section]
\newtheorem{lemma}[theorem]{Lemma}
\newtheorem*{theorem*}{Theorem}{\bf}{\it}
\newtheorem*{proposition*}{Proposition}{\bf}{\it}
\newtheorem*{observation*}{Observation}{\bf}{\it}
\newtheorem*{lemma*}{Lemma}{\bf}{\it}
\theoremstyle{definition}
\theoremstyle{remark}
\newtheorem{remark}[theorem]{Remark}
\numberwithin{equation}{section}
\begin{document}

\title{The John--Nirenberg constant of $\BMO^p,$ $p> 2$}

\author{Leonid Slavin}
\address{University of Cincinnati}
\email{leonid.slavin@uc.edu}

\author{Vasily Vasyunin}
\address{St.~Petersburg Department of the V.~A.~Steklov
Mathematical Institute, RAS, \and St.~Petersburg State University}
\email{vasyunin@pdmi.ras.ru}

\thanks{Both authors were supported by the Russian Science Foundation grant 14-41-00010}

\subjclass[2010]{Primary 42A05, 42B35, 49K20}

\keywords{BMO, John--Nirenberg inequality, Bellman function}

\begin{abstract}
This paper is a continuation of earlier work by the first author who determined the John--Nirenberg constant of $\BMO^p\big((0,1)\big)$ for the range $1\le p\le 2.$ Here, we compute that constant for $p>2.$ As before, the main results rely on Bellman functions for the $\cre{L^p}$ norms of logarithms of $A_\infty$ weights, but for $p>2$ these functions turn out to have a significantly more complicated structure than for $1\le p\le 2.$ 
\end{abstract}

\maketitle


\section{Preliminaries and main results}
\label{main_results}
For a finite interval $J$ and a function $\varphi\in L^1(J),$ let $\av{\varphi}J$ denote the average of $\varphi$ over $J$ with respect to the Lebesgue measure,
$
\av{\varphi}J=\frac1{|J|}\int_J\varphi.
$
\cre{Take an interval $Q$ and $p>0,$} and let $\BMO(Q)$ be the (factor-)space
\eq[1]{
\BMO(Q)=\{\varphi\in L^1\cre{(Q)}\colon\quad\|\varphi\|_{\BMO^p(Q)}:=\sup_{\text{interval}~J\subset Q}\av{|\varphi-\av{\varphi}J|^p}J^{1/p}<\infty\}.
}
It is a classical fact that all $p$-based (quasi-)norms are equivalent, which justifies omitting the index $p$ in the left-hand side.

A weight is an almost everywhere positive function. \cre{We say that a weight $w$ belongs to $A_\infty(Q),$ $w\in A_\infty(Q),$ if both $w$ and $\log w$ are integrable on $Q$ and the following condition holds:
$$
[w]^{}_{A_\infty(Q)}:=\sup_{\text{interval~}J\subset Q}\av{w}Je^{-\av{\log w}J}<\infty.
$$
The quantity $[w]_{A_\infty(Q)}$ is called the $A_\infty(Q)$-characteristic of $w.$
When $Q$ is fixed or unimportant, we write simply $\BMO$ for $\BMO(Q)$ and $A_\infty$ for $A_\infty(Q).$}

BMO functions are locally exponential integrable. We can state this property in the form of the so-called integral John--Nirenberg inequality, which is a variant of the classical weak-type inequality from~\cite{jn}.
\begin{theorem*}[John--Nirenberg]
\cre{Take $p>0.$ There exists a number $\ve_0(p)>0$ such that if $\ve\in[0,\ve_0(p)),$ $Q$ is an interval, and $\varphi\in\BMO(Q)$ with $\|\varphi\|_{\scriptscriptstyle\BMO^p(Q)}\le\ve,$
then there is a number $C(\ve,p)>0$ such that for any interval $J\subset Q,$
\eq[jn2]{
\av{e^\varphi}J \le C(\ve,p)e^{\av{\varphi}{\scriptscriptstyle J}}.
}
}
\end{theorem*}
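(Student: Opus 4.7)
My plan is to reduce the integral form~\eqref{jn2} to the classical weak-type John--Nirenberg inequality~\cite{jn} and then integrate the resulting distributional estimate via the layer-cake formula. Since replacing $\varphi$ by $\varphi-c$ for any constant $c$ changes neither side of~\eqref{jn2}, I may normalize by subtracting $\av\varphi J$ and thereby reduce to the case $\av\varphi J=0,$ in which case it suffices to bound $\frac1{|J|}\int_J e^\varphi$ by a constant depending only on $\ve$ and $p.$

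The next step is to reduce to the familiar $p=1$ setting. The equivalence of $p$-based BMO (quasi-)norms, noted right after~\eqref{1}, yields a constant $K(p)>0$ such that $\|\varphi\|_{\BMO^1(Q)}\le K(p)\ve.$ The classical weak-type John--Nirenberg inequality then furnishes absolute constants $c_1,c_2>0$ for which
\[\bigl|\{x\in J:|\varphi(x)-\av\varphi J|>\lambda\}\bigr|\le c_1\,e^{-c_2\lambda/(K(p)\ve)}\,|J|\]
holds for every interval $J\subset Q$ and every $\lambda>0.$

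With this tail bound in hand, I would apply the layer-cake formula, together with the normalization $\av\varphi J=0,$ to write
\[\frac1{|J|}\int_J e^\varphi\,dx=\frac1{|J|}\int_{-\infty}^\infty\bigl|\{x\in J:\varphi(x)>u\}\bigr|\,e^u\,du.\]
The contribution from $u<0$ is at most $1.$ For $u>0,$ the level set in question lies inside $\{|\varphi|>u\},$ so the weak-type bound above controls its measure by $c_1 e^{-c_2 u/(K(p)\ve)}|J|.$ The resulting integrand decays like $e^{-(c_2/(K(p)\ve)-1)u},$ which is integrable on $(0,\infty)$ precisely when $\ve<\ve_0(p):=c_2/K(p);$ combining the two pieces yields
\[\av{e^\varphi}J\le 1+\frac{c_1}{c_2/(K(p)\ve)-1}=:C(\ve,p),\]
and undoing the normalization gives the assertion~\eqref{jn2}.

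The main obstacle is really just producing a weak-type tail estimate whose exponential rate scales correctly with the smallness of $\ve;$ the classical Calder\'on--Zygmund stopping-time argument delivers this, so the argument is essentially routine. The more delicate issue, central to the body of the paper but irrelevant to the existence claim, is that routing through $p=1$ via norm equivalence is wasteful: obtaining the \emph{sharp} threshold $\ve_0(p)$ requires a weak-type estimate derived directly from the $\BMO^p$ condition, which is precisely the task the Bellman-function analysis of this paper is designed to accomplish.
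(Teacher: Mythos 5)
The paper does not prove this theorem: it is stated as classical background with a citation to John--Nirenberg~\cite{jn}, and the whole point of the paper is to find the \emph{sharp} value of $\ve_0(p),$ not to establish that some positive threshold exists. So there is no internal proof to compare against. Your argument --- normalize so $\av\varphi J=0,$ pass to the $\BMO^1$ norm, invoke the weak-type John--Nirenberg estimate, and integrate by layer-cake --- is the standard textbook derivation of the integral form from the weak form, and it is correct. The threshold $\ve_0(p)=c_2/K(p)$ it yields is far from sharp, but the statement only claims existence, so that is immaterial; you rightly flag this and note that the Bellman-function machinery is what produces the sharp $\ve_0(p).$

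One point worth making explicit. For $p\ge1$ the reduction to $\BMO^1$ is just Jensen's inequality and you may take $K(p)=1.$ For $0<p<1,$ however, the inequality $\|\varphi\|_{\BMO^1}\le K(p)\|\varphi\|_{\BMO^p}$ is genuinely nontrivial (it is a John--Str\"omberg-type theorem, proved by a stopping-time argument in its own right); it is \emph{not} a corollary of the weak-type John--Nirenberg inequality as stated for $\BMO^1,$ since the implication runs in the opposite direction. The paper's passing remark that ``all $p$-based (quasi-)norms are equivalent'' is what you are leaning on, and that is acceptable here since the paper itself treats it as known, but a fully self-contained argument for small $p$ would either need to cite John--Str\"omberg or run a Calder\'on--Zygmund iteration (e.g.\ via medians or local mean oscillation) directly from the $\BMO^p$ hypothesis rather than routing through $p=1.$
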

\noindent We will always use $\ve_0(p)$ to denote the best -- largest possible -- constant in this theorem and call it the John--Nirenberg constant of $\BMO^p$ \cre{(on an interval). Likewise, $C(\ve,p)$ will denote the smallest possible constant in~\eqref{jn2}.}

Observe that~\eqref{jn2} means that if $\varphi\in\BMO,$ then $e^{\ve\varphi}\in A_\infty$ for all sufficiently small $\ve>0.$ For $\varphi\in\BMO,$ let
\eq[ephi]{
\cre{\ve_\varphi=\sup\{\ve:~e^{\ve \varphi}\in A_\infty\}.}
}
In fact, it can be shown that
$$
\cre{\ve_0(p)=\inf\{ \ve_\varphi:~\|\varphi\|_{\BMO^p}\!=\!1\}=\sup\{\ve:~\forall\varphi,~\|\varphi\|_{\BMO^p}\!=\!1~\Longrightarrow ~e^{\ve\varphi}\in A_\infty\}}.
$$

In this paper, our goal is to compute $\ve_0(p)$ for the case $p>2.$ Here are some previous results in that direction: Korenovskii~\cite{korenovsky} and Lerner~\cite{lerner} computed the analogs for the weak-type John--Nirenberg inequality of $\ve_0(1)$ and $C(\ve,1),$ respectively; in~\cite{sv}\cre{,} we determined $\ve_0(2)$ and $C(\ve,2);$ in~\cite{vv}, the second author and \cre{A.}~Volberg found all constants in the weak-type inequality for $p=2;$ and, finally, in~\cite{p<2}, the first author determined $\ve_0(p)$ for $p\in[1,2]$ and $C(\ve,p)$ for $p\in(1,2]$ and large enough $\ve.$ This last paper built the framework that we follow here, and we refer the reader  to it for an in-depth discussion of the tools involved and the differences between the cases $p=2$ and $p\ne 2.$ 

Let us state the relevant theorem from~\cite{p<2}.
\begin{theorem}[\cite{p<2}]
\label{t0}
For $p\in[1,2]$,
$$
\ve_0(p)=\left[\frac pe\Big(\Gamma(p)-\int_0^1t^{p-1}e^t\,dt\Big)+1\right]^{1/p}.
$$
Furthermore,
if $1<p\le 2,$ then for all $(2-p)\ve_0(p)\le \ve<\ve_0(p),$
\eq[cep]{
C(\ve,p)=\frac{e^{-\ve/\ve_0(p)}}{1-\ve/\ve_0(p)};
}
and for all $0\le\ve < \frac{2}e,$  
\eq[ce1]{
\frac{e^{-\frac e2\ve}}{1-\frac e2\ve}\le C(\ve,1)\le \frac{1}{1-\frac e2\,\ve}.
}
\end{theorem}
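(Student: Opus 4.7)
The first step is to translate the definition of $\ve_0(p)$ into an extremal problem for $A_\infty$ weights. Writing $w=e^{\ve\varphi}$ and using $\|\ve\varphi\|_{\BMO^p}=\ve\|\varphi\|_{\BMO^p}$, one checks that $\ve_0(p)$ equals the largest $\delta>0$ for which $\sup_{\|\varphi\|_{\BMO^p(Q)}\le\delta}\sup_{J\subset Q}\av{e^{\varphi}}J e^{-\av{\varphi}J}$ is finite. So Theorem~\ref{t0} amounts to computing, as a function of $\delta$, the extremal $A_\infty$-characteristic among weights $e^{\varphi}$ with $\|\varphi\|_{\BMO^p}\le\delta$, and identifying the $\delta$ at which this quantity first becomes infinite; the corresponding value for $\delta<\ve_0(p)$ is exactly $C(\delta,p)$.

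My plan is to carry out this computation by the Bellman function method. Introduce the function $\mathbf{B}_\delta$ on an appropriate parameter domain $\Od\subset\mathbb{R}^2$ defined by
$$
\mathbf{B}_\delta(x_1,x_2)=\sup\bigl\{\av{e^\varphi}{Q}e^{-\av{\varphi}{Q}}:\av{\varphi}{Q}=x_1,\ m_p(\varphi)=x_2,\ \|\varphi\|_{\BMO^p(Q)}\le\delta\bigr\},
$$
where $m_p$ is a moment functional encoding the $\BMO^p$ constraint (for $p=2$ this is simply $\av{\varphi^2}{Q}$; for general $p$ a slightly more elaborate choice is required). Translation invariance reduces $\mathbf{B}_\delta$ to a function of $x_2-\Phi(x_1)$ for some $\Phi$. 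The standard Bellman machinery — concavity along chords produced by splitting $Q$ into two halves, combined with the boundary value $\mathbf{B}_\delta\equiv1$ on constant weights — then yields a degenerate PDE of Monge--Amp\`ere type for $\mathbf{B}_\delta$ in the interior of $\Od$.

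The analytic core of the proof is the explicit solution of this boundary value problem. For $p\in[1,2]$ the characteristic foliation of $\Od$ is simple enough that the PDE collapses to a single ODE along characteristics, integrable in closed form. Integrating inward from the ``constant weight'' boundary gives $\mathbf{B}_\delta$ explicitly, and its blow-up locus determines $\ve_0(p)$. The combination $\Gamma(p)-\int_0^1t^{p-1}e^t\,dt$ in the answer should arise because the extremal weight is of power--exponential type, so that averaging $e^{\varphi}$ and $\varphi$ against it produces incomplete Gamma-type integrals after the natural change of variables; the prefactor $p/e$ and the $+1$ come from the normalization and the boundary data. The value~\eqref{cep} of $C(\ve,p)$ then comes from evaluating $\mathbf{B}_\delta$ at the ``apex'' of $\Od$; the rational--exponential form $e^{-\ve/\ve_0}/(1-\ve/\ve_0)$ is consistent with a concentrated singular extremizer living on a half-line.

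The main obstacle is twofold. First, guessing the correct characteristic foliation of $\Od$ and the corresponding ansatz for $\mathbf{B}_\delta$: for $p\neq2$ the characteristics depend non-trivially on $p$ and are not dictated by symmetry alone. Second, once a candidate is constructed from the ODE, one has to verify the global concavity (main inequality) of $\mathbf{B}_\delta$ on all of $\Od$ — not just infinitesimally — and match this upper bound with an explicit extremal sequence providing the corresponding lower bound. The case $p=1$ is particularly delicate because $|\varphi-\av{\varphi}{J}|$ is non-smooth in $\varphi$, which presumably explains why~\eqref{ce1} yields only two-sided bounds for $C(\ve,1)$ rather than an identity.
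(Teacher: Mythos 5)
The structural plan you propose is not the one that actually works for $p\neq2$, and the gap is exactly at the point you wave past with ``for general $p$ a slightly more elaborate choice is required.'' For $p=2$, the constraint $\|\varphi\|_{\BMO^2(Q)}\le\delta$ is equivalent to the statement that the pair $(\av{\varphi}J,\av{\varphi^2}J)$ lies in the parabolic strip $\{x_1^2\le x_2\le x_1^2+\delta^2\}$ for \emph{every} subinterval $J$, which is precisely the self-similarity one needs for the two-variable Bellman method (concavity along chords from splitting $Q$). For $p\neq2$, there is no moment functional $m_p$ with this property: the $\BMO^p$ constraint involves the $L^p$ norm of $\varphi-\av{\varphi}J$ over all $J$, and this cannot be re-expressed as a pointwise restriction on a fixed finite set of averages of $\varphi$ over $J$. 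So the domain $\Od$ you posit does not exist, and the ``upper Bellman function'' $\mathbf{B}_\delta$ has no Bellman-type dynamics to exploit.

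What the cited paper \cite{p<2} does instead — and this is spelled out in the present paper's Section~\ref{main_results} — is to \emph{dualize}. One replaces the $\BMO^p$ constraint (not self-similar) by the $A_\infty$ constraint $[e^\varphi]_{A_\infty}\le C$ (which \emph{is} self-similar, since $\av{e^\varphi}J\le Ce^{\av{\varphi}J}$ for every $J$ is exactly membership of $(\av{\varphi}J,\av{e^\varphi}J)$ in the fixed domain $\Omega_C$ of~\eqref{domain}), and one \emph{minimizes} $\av{|\varphi|^p}Q$ rather than maximizing the exponential oscillation. This gives the lower Bellman function $\bel{b}_{p,C}$ of~\eqref{bh}. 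Theorem~\ref{main} then converts the asymptotics of $\bel{b}_{p,C}(0,C)$ as $C\to\infty$ into a lower bound for $\ve_0(p)^p$, and the upper bound $\ve_0(p)\le\omega(p)$ comes from the single explicit function $\varphi_0(t)=\log(1/t)$ (Lemma~\ref{phi0}). Your intuitions about the extremizers being of power-exponential type and the incomplete Gamma integrals arising from averaging against them are correct, but they enter through this dual minimization, not through the direct computation of a supremum. The formula~\eqref{cep} for $C(\ve,p)$ similarly comes from the dual Bellman function and a matching of the tangential foliation at the ``apex,'' not from an upper Bellman function in BMO-moment variables. Until you fix the choice of variables to make the constraint self-similar — i.e., replace your $\mathbf{B}_\delta$ by the dual $\bel{b}_{p,C}$ — the proposal does not get off the ground for $p\neq2$.
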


We can finally complete the picture for all $p\ge1.$ Remarkably, the formula for $\ve_0(p)$ for the case $p>2$ is the same as for $1\le p\le 2,$ though it takes much more work to show.
\begin{theorem}
\label{t1}
For $p>2$,
\eq[maineps]{
\ve_0(p)=\left[\frac pe\Big(\Gamma(p)-\int_0^1t^{p-1}e^t\,dt\Big)+1\right]^{1/p}.
}
\end{theorem}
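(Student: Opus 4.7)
The plan is to compute $\ve_0(p)$ via the Bellman function method, in the spirit of \cite{p<2}, but with a substantially more intricate foliation of the Bellman domain that reflects the change in geometry at $p=2$.

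First, I would recast the problem as a two-variable extremal problem. Fix $\ve>0$ and an interval $Q$, and consider the Bellman function
$$
B_\ve(x_1,x_2)=\sup\bigl\{\av{e^{\ve\varphi}}Q e^{-\ve\av{\varphi}{\scriptscriptstyle Q}}:\ \av{\varphi}Q=x_1,\ \av{|\varphi-x_1|^p}Q\le x_2^p\bigr\},
$$
defined on the appropriate Bellman domain (determined by the sharp $p$-Jensen-type relation between the first moment and the $p$-th central moment). The John--Nirenberg property $e^{\ve\varphi}\in A_\infty$ with uniform bound is equivalent to the finiteness of $B_\ve$ on this domain, so $\ve_0(p)=\sup\{\ve:\ B_\ve<\infty\}$. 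By translation invariance, $B_\ve$ depends on $x_1$ only through the factor $e^{\ve x_1}$ (which cancels), so the real work is on a one-dimensional slice parametrized by $x_2\in[0,\ve]$, with blowup occurring as $x_2\uparrow\ve$.

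Next, I would construct the candidate Bellman function as a solution of the homogeneous \ma equation with the boundary data coming from constant functions ($\varphi\equiv\const$ on $Q$). For $1\le p\le 2$, a single family of linear extremals (chords) fills the domain; for $p>2$, the convexity constraints force the extremal foliation to split. I would guess the structure: one subregion foliated by straight chords issuing from a "corner" of the domain, and a complementary subregion foliated by curved extremals (trajectories of an ODE encoding the \ma equation), glued along a free boundary where the two families match $C^1$. Writing down the ansatz in each piece and imposing the matching conditions should reduce to a one-variable ODE whose solution can be written in closed form. The critical $\ve$ at which the free boundary degenerates---and $B_\ve$ ceases to be finite---should yield the right-hand side of~\eqref{maineps}: the integral $\int_0^1 t^{p-1}e^t\,dt$ arises naturally from integrating the extremal weight along a curvilinear trajectory, and the $\Gamma(p)$ term from the complementary contribution.

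Once the candidate $B$ is in hand for $\ve<\ve^*$, the upper estimate $\ve_0(p)\ge\ve^*$ follows by the standard Bellman induction on dyadic subintervals: local concavity/convexity of $B$ along admissible directions forces $B$ to majorize the relevant averaging functional, so $B<\infty$ implies a uniform John--Nirenberg bound. The matching lower estimate $\ve_0(p)\le\ve^*$ requires exhibiting a near-extremal sequence $\varphi_n\in\BMO$ with $\|\varphi_n\|_{\scriptscriptstyle\BMO^p}\le\ve$ for which $\av{e^{\varphi_n}}Q/e^{\av{\varphi_n}{\scriptscriptstyle Q}}\to\infty$ as $\ve\uparrow\ve^*$; these functions should be read off the extremal foliation, being roughly piecewise of the form $\log$ of the extremal weight.

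The hard part will be the free-boundary analysis: proving that the two-region ansatz is actually correct---i.e., that no other foliation yields a larger Bellman function---and establishing the requisite regularity and monotonicity across the free boundary to conclude that $\ve^*$ is exactly the blowup threshold. Equivalently, one must verify the concavity/convexity hypotheses of the Bellman induction precisely on the constructed candidate, which, unlike in the $p\le 2$ case, requires handling two qualitatively different types of extremal trajectories simultaneously. This is where the "significantly more complicated structure" noted in the abstract will manifest, and where most of the technical effort must go.
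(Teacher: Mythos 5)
Your approach is a genuinely different route from the paper's, and it runs into a fundamental obstruction that the paper is constructed specifically to avoid.

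You propose the \emph{direct} Bellman function for the John--Nirenberg inequality: maximize the exponential oscillation $\av{e^{\ve\varphi}}Q e^{-\ve\sav{\varphi}Q}$ subject to a constraint on the $p$-th central oscillation, with the blow-up threshold in $\ve$ being $\ve_0(p)$. This is precisely the setup used successfully for $p=2$ (Slavin--Vasyunin, Vasyunin--Volberg). However, for $p\ne2$ it has two problems. First, as literally written your $B_\ve$ is $+\infty$ on the interior of its domain: the constraint only controls the $p$-th central moment over $Q$ itself, not over subintervals, so one can push $\av{e^{\ve\varphi}}Q$ to infinity while keeping $\av{\varphi}Q$ and $\av{|\varphi-x_1|^p}Q$ fixed. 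To make the definition non-trivial you must impose $\|\varphi\|_{\BMO^p}\le\ve$ on all subintervals. Second, and more seriously, even after that correction the choice of Bellman variable $x_2$ with $x_2^p=\av{|\varphi-x_1|^p}Q$ is not compatible with Bellman induction: the $p$-th \emph{central} moment $\av{|\varphi-\av{\varphi}J|^p}J$ is not an average $\av{\Phi(\varphi)}J$ of a fixed function $\Phi$ of $\varphi$ alone (it depends on $\av{\varphi}J$), so it does \emph{not} satisfy the linear splitting rule $y(J)=\tfrac12(y(J^-)+y(J^+))$ that the whole method relies on. For $p=2$ one escapes because $\av{|\varphi-\av{\varphi}J|^2}J=\av{\varphi^2}J-\av{\varphi}J^2$, so the genuine Bellman variable is $\av{\varphi^2}J$ and the domain is a parabolic strip. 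For $p\ne2$ there is no such reduction: the constraint $\|\varphi\|_{\BMO^p}\le\ve$ does not carve out a clean two-dimensional domain in the variables $\big(\av{\varphi}J,\av{|\varphi|^p}J\big)$, and your ``appropriate Bellman domain determined by the sharp $p$-Jensen-type relation'' does not exist in the required form. This is exactly the obstruction that forced the switch of perspective in~\cite{p<2}, and the present paper inherits that framework.

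What the paper actually does is solve the \emph{dual} problem: fix the $A_\infty$-characteristic $[e^\varphi]_{A_\infty}\le C$ (a constraint that \emph{does} produce the explicit, well-behaved domain $\Omega_C$ in the variables $\av{\varphi}Q$, $\av{e^\varphi}Q$), and minimize $\av{|\varphi|^p}Q$, producing the lower Bellman function $\bel{b}_{p,C}$. Theorem~\ref{main} (imported from~\cite{p<2}) converts a family of minorants $b_C\le\bel{b}_{p,C}$ into the lower bound $\ve_0^p(p)\ge\limsup_{C\to\infty}b_C(0,C)$. The technical work is then building the candidate $b_{p,C}$ on $\Omega_C$ (Section~\ref{new_bellman}), proving it is locally convex there (Lemma~\ref{lemma_conv}, which gives Lemma~\ref{direct}), and evaluating $\lim_{C\to\infty}b_{p,C}(0,C)=\omega^p(p)$ by an explicit computation with $m_3$. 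Your intuition about the foliation --- a tangential regime, a chord regime, and a free boundary where they must be matched --- does correspond qualitatively to the four-piece decomposition $\Omega_C=R_1\cup\dots\cup R_4$ of the paper, but it is being applied to the wrong Bellman function. Finally, for the upper bound $\ve_0(p)\le\omega(p)$ the paper does not need a near-extremal sequence: the single function $\varphi_0(t)=\log(1/t)$ has $\ve_{\varphi_0}=1$ and $\|\varphi_0\|_{\BMO^p}=\omega(p)$ (Lemma~\ref{phi0}), which already gives the bound.

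In short, you will need to reorganize the argument around the dual Bellman function $\bel{b}_{p,C}$ on $\Omega_C$; as written, the induction step for your $B_\ve$ cannot be carried out because the central $p$-moment is not a legitimate Bellman variable for $p\ne2$.
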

\cre{In contrast with the case $1<p\le 2,$ for $p>2$} we do not know the exact $C(\ve,p)$ \cre{for any $\ve.$} While we could estimate this constant in a manner somewhat similar to~\eqref{ce1}, the estimates we currently have seem much too implicit to be useful, so we omit them.

Without going into details, we mention another, more important difference between the cases $p\le 2$ and $p>2.$ It was shown in~\cite{p<2} that 
the constant $\ve_0(p)$ is attained in the weak-type John--Nirenberg inequality for $1<p\le 2$ (the case $p=1$ was treated in~\cite{korenovsky} and~\cite{lerner}, while the case $p=2$ had been previously addressed in~\cite{vv}). However, the method used to show this fact for $p\le2$ fails for $p>2,$ and we do not actually know if the constant is attained (though we conjecture that it is). 

\cre{On the other hand, still} another interesting result from~\cite{p<2} does go through for $p>2.$ Specifically, we have the following theorem, which extends to $p>2$ the main result of Corollary~1.5 from~\cite{p<2}. It is a sharp lower estimate for the distance in $\BMO$ to $L^\infty$ in the spirit of Garnett and Jones~\cite{gj}.
\begin{theorem}
\label{t2.5}
If $p>2,$ $Q$ is an interval, and $\varphi\in\BMO(Q),$ then
\eq[dist1]{
\inf_{f\in L^\infty(Q)}\|\varphi-f\|_{\BMO^p(Q)}\ge \frac{\ve_0(p)}{\min\{\ve_{\varphi},\ve_{-\varphi}\}},
}
and this inequality is sharp.
\end{theorem}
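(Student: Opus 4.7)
The plan is to deduce~\eqref{dist1} from Theorem~\ref{t1} together with the fact that the right-hand side of~\eqref{dist1} is invariant under addition of a bounded function, and then to derive sharpness from the variational characterization $\ve_0(p)=\inf\{\ve_\varphi:\|\varphi\|_{\BMO^p(Q)}=1\}$ recorded right after~\eqref{ephi}.

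\emph{Lower bound.} I would first observe that, for any $f\in L^\infty(Q)$ and any $\ve>0$, the weight $e^{\ve(\varphi-f)}=e^{\ve\varphi}\cdot e^{-\ve f}$ differs from $e^{\ve\varphi}$ by the factor $e^{-\ve f}$, which is bounded above and below by positive constants. A short computation with the definition of $[\,\cdot\,]_{A_\infty(Q)}$ shows that multiplying a weight by such a factor changes its characteristic only by a bounded multiplicative factor; hence $e^{\ve(\varphi-f)}\in A_\infty(Q)$ if and only if $e^{\ve\varphi}\in A_\infty(Q)$, so $\ve_{\varphi-f}=\ve_\varphi$ and likewise $\ve_{-(\varphi-f)}=\ve_{-\varphi}$. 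Therefore it suffices to prove
\[
\|\psi\|_{\BMO^p(Q)}\ge\frac{\ve_0(p)}{\min\{\ve_\psi,\ve_{-\psi}\}}
\qquad\text{for every }\psi\in\BMO(Q),
\]
and then apply this to $\psi=\varphi-f$ and take the infimum over $f$. For this inequality I would use Theorem~\ref{t1} directly: writing $M=\|\psi\|_{\BMO^p(Q)}$, for any $\ve<\ve_0(p)/M$ the function $\ve\psi$ has BMO$^p$-norm strictly below $\ve_0(p)$, so the John--Nirenberg theorem yields $e^{\ve\psi}\in A_\infty(Q)$, i.e.~$\ve\le\ve_\psi$. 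Letting $\ve\nearrow\ve_0(p)/M$ gives $M\ge\ve_0(p)/\ve_\psi$; applying the same argument to $-\psi$ (which has the same BMO$^p$-norm as $\psi$) yields $M\ge\ve_0(p)/\ve_{-\psi}$, and combining the two gives the claim.

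\emph{Sharpness.} Using the infimum characterization of $\ve_0(p)$ above, I would choose $\varphi_n\in\BMO(Q)$ with $\|\varphi_n\|_{\BMO^p(Q)}=1$ and $\ve_{\varphi_n}\to\ve_0(p)$. The trivial choice $f\equiv 0$ gives $\inf_f\|\varphi_n-f\|_{\BMO^p(Q)}\le 1$, while $\min\{\ve_{\varphi_n},\ve_{-\varphi_n}\}\le\ve_{\varphi_n}$, so
\[
\Big(\inf_{f\in L^\infty(Q)}\|\varphi_n-f\|_{\BMO^p(Q)}\Big)\cdot\min\{\ve_{\varphi_n},\ve_{-\varphi_n}\}\le \ve_{\varphi_n}\xrightarrow[n\to\infty]{}\ve_0(p),
\]
while the lower bound just proved shows that the same product is always at least $\ve_0(p)$. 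Hence the product tends to $\ve_0(p)$, which shows that the constant $\ve_0(p)$ on the right-hand side of~\eqref{dist1} is best possible.

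The essence of this outline is that Theorem~\ref{t2.5} is a fairly soft consequence of Theorem~\ref{t1} together with the definition of $\ve_0(p)$; the actual difficulty of the paper lies in proving Theorem~\ref{t1}. Once that result is in hand, no further genuine obstacle remains, and I would expect the only care needed to be the routine verification that the $A_\infty$-characteristic is stable under multiplication by an $L^\infty$-bounded positive factor.
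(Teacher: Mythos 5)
Your proof is correct, but it takes a genuinely different route from the paper's. For the lower bound, you invoke the variational characterization $\ve_0(p)=\inf\{\ve_\varphi:\|\varphi\|_{\BMO^p}=1\}$ recorded after~\eqref{ephi}; combined with the scaling identity $\ve_{c\psi}=\ve_\psi/c$ and the shift-invariance $\ve_{\varphi-f}=\ve_\varphi$ for $f\in L^\infty(Q),$ this makes~\eqref{dist1} almost a tautology, and in fact your lower-bound argument does not even use the explicit value of $\ve_0(p)$ from Theorem~\ref{t1}. The paper's proof instead establishes $\ve_\varphi^p\,\|\varphi\|^p_{\BMO^p(Q)}\ge\ve_0^p(p)$ directly from the Bellman machinery (Lemma~\ref{direct} together with the continuity Lemma~\ref{cont}), essentially re-running the argument behind Theorem~\ref{t1} for the fixed $\varphi$ and letting the $A_\infty$-characteristic blow up; along the way it produces the quantitative finite-$C$ refinement $\ve^p\|\varphi\|^p_{\BMO^p(Q)}\ge b_{p,F(\ve)}\big(0,F(\ve)\big),$ and it avoids relying on the characterization, which the paper states but does not prove. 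For sharpness, you take a minimizing sequence guaranteed by that same characterization, whereas the paper uses the single explicit extremizer $\varphi_0(t)=\log(1/t)$ from Lemma~\ref{phi0}, for which $\ve_{\varphi_0}=1,$ $\ve_{-\varphi_0}=\infty,$ and $\|\varphi_0\|_{\BMO^p}=\ve_0(p),$ so that~\eqref{dist1} holds with equality. Both arguments show that the constant is best possible; yours is shorter but leans on the stated-without-proof characterization, while the paper's is independent of it and also shows the bound is actually attained. Your sketch that $\ve_{\varphi-f}=\ve_\varphi$ (stability of the $A_\infty$-characteristic under multiplication by a positive factor bounded above and below) is correct and is used tacitly in the paper as well.
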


As explained in~\cite{p<2}, the main idea for computing $\ve_0(p)$ for $p\ne 2$ is to consider the dual problem: instead of estimating for which values of $\|\varphi\|_{\BMO^p}$ the exponential oscillation $\ave{e^{\varphi-\ave{\varphi}}}$ \cre{might become unbounded}, one estimates from below $\BMO^p$ oscillations of logarithms of $A_\infty$ weights and computes their asymptotics as the $A_\infty$-characteristic goes to infinity. This idea is formalized in the following general theorem.

Fix $p>0.$ \cre{For $C\ge1,$ let}
\eq[domain]{
\Omega_C=\{x\in\mathbb{R}^2: e^{x_1}\le x_2\le C\,e^{x_1}\}.
}
For an interval $Q\cre{,}$ \cre{$C\ge1,$} and every $x=(x_1,x_2)\in\Omega_C,$ let 
\eq[adm]{
E_{x,C,Q}=\{\varphi\in L^1(Q):~\av{\varphi}Q=x_1,~\av{e^\varphi}Q=x_2,~[e^\varphi]_{A_\infty(Q)}\le C\}.
}
\cre{We will call elements of $E_{x,C,Q}$ test functions.}
Define the following lower Bellman function:
\eq[bh]{
\bel{b}_{p,C}(x)=\inf\{\av{|\varphi|^p}Q:~\varphi\in E_{x,C,Q}\}.
}

\begin{theorem}[\cite{p<2}]
\label{main}
\cre{Take $p>0.$} Assume that there exists a family of functions $\{b_C\}_{C\ge1}$ such that for each~$C\cre{,}$ $b_C$ is defined on $\Omega_C,$ $b_C\le \bel{b}_{p,C},$ and $b_C(0,\cdot)$ is continuous on the interval $[1,C].$ Then
\eq[mt1]{
\ve^p_0(p)\ge \limsup_{C\to\infty} b_C(0,C).
}
\end{theorem}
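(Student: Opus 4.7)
The strategy is dual to the usual John--Nirenberg viewpoint: using the identity $\ve_0(p)=\inf\{\ve_\psi:\|\psi\|_{\BMO^p(Q)}=1\}$ recorded after~\eqref{ephi}, it suffices to prove $\ve_\psi^p\ge\limsup_{C\to\infty}b_C(0,C)$ for every $\psi\in\BMO(Q)$ with unit $\BMO^p$-norm (the case $\ve_\psi=\infty$ being trivial). Given a large target $C$, the idea is to pick a scalar $\ve<\ve_\psi$ with $[e^{\ve\psi}]_{A_\infty(Q)}=C$, locate a subinterval $J\subset Q$ on which that supremum is nearly attained, and notice that the recentering $\varphi:=\ve(\psi-\av{\psi}{J})$ restricted to $J$ lies in $E_{(0,y),C,J}$ with $y$ close to $C$. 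Comparing the lower Bellman bound on $\av{|\varphi|^p}{J}$ with the trivial upper bound $\av{|\varphi|^p}{J}^{1/p}\le\|\ve\psi\|_{\BMO^p(Q)}=\ve$ then turns the hypothesis into a bound on $\ve$.

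Step 1: I would study $\Phi(\ve):=[e^{\ve\psi}]_{A_\infty(Q)}$ on $[0,\ve_\psi)$. For each interval $I\subset Q$, the map $\ve\mapsto\log\av{e^{\ve\psi}}{I}-\ve\av{\psi}{I}$ is convex (log of a moment generating function minus a linear term), so $\Phi$ is a supremum of convex continuous functions, hence convex, and therefore continuous on $(0,\ve_\psi)$ and bounded on every compact subinterval of $[0,\ve_\psi)$. The limit $L:=\lim_{\ve\to\ve_\psi^-}\Phi(\ve)\in[1,\infty]$ exists by convexity. I claim $L=\infty$; otherwise $e^{\ve_\psi\psi}\in A_\infty$ with characteristic $L$, and the quantitative self-improvement of $A_\infty$ (Gehring-type reverse H\"older) would produce $\eta>0$ with $e^{(1+\eta)\ve_\psi\psi}\in A_\infty$, contradicting the definition of $\ve_\psi$. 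Thus $\Phi$ attains every sufficiently large value by the intermediate value theorem.

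Step 2: given such $C$ and $\delta>0$, pick $\ve\in(0,\ve_\psi)$ with $\Phi(\ve)=C$ and $J\subset Q$ with $y:=\av{e^{\ve\psi}}{J}e^{-\ve\av{\psi}{J}}\in[C-\delta,C]$. Setting $\varphi:=\ve(\psi-\av{\psi}{J})$ on $J$, one checks $\av{\varphi}{J}=0$, $\av{e^\varphi}{J}=y$, and
$$
[e^\varphi]_{A_\infty(J)}=[e^{\ve\psi}]_{A_\infty(J)}\le[e^{\ve\psi}]_{A_\infty(Q)}=C,
$$
so $\varphi\in E_{(0,y),C,J}$. The hypothesis then gives $\av{|\varphi|^p}{J}\ge\bel{b}_{p,C}(0,y)\ge b_C(0,y)$, while applying the definition of $\|\cdot\|_{\BMO^p(Q)}$ to the subinterval $J$ gives $\av{|\varphi|^p}{J}^{1/p}\le\ve$. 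Sending $\delta\to 0$ and invoking the assumed continuity of $b_C(0,\cdot)$ at $C$ yields $\ve^p\ge b_C(0,C)$.

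Step 3: select $C_n\to\infty$ realizing the $\limsup$ together with $\ve_n\in(0,\ve_\psi)$ such that $\Phi(\ve_n)=C_n$. Since $\Phi$ is bounded on every $[0,\ve_\psi-\eta]$, the condition $\Phi(\ve_n)\to\infty$ forces $\ve_n\to\ve_\psi^-$, and
$$
\ve_\psi^p=\lim_n\ve_n^p\ge\lim_n b_{C_n}(0,C_n)=\limsup_{C\to\infty}b_C(0,C).
$$
Taking the infimum over $\psi$ yields~\eqref{mt1}. I expect the main obstacle to be the blow-up claim $L=\infty$ in Step 1: it hinges on the quantitative self-improvement of $A_\infty$, which must be cited with a normalization of the $A_\infty$-characteristic matching the one used in~\eqref{domain}. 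Once that is secured, the remaining steps are routine unwinding of definitions.
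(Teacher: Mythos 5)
Your argument is correct and follows essentially the same route the paper (and its source \cite{p<2}) takes: pass to a function $\psi$ with unit $\BMO^p$ norm, study $\Phi(\varepsilon)=[e^{\varepsilon\psi}]_{A_\infty(Q)}$, recenter on a near-extremal subinterval to produce a test function in $E_{(0,y),C,J}$, compare the lower Bellman bound with the trivial upper bound $\av{|\varphi|^p}{J}\le\varepsilon^p$, and let $\varepsilon\nearrow\varepsilon_\psi$. Two small remarks: your Step~1 is a self-contained re-derivation of Lemma~\ref{cont} (continuity, monotonicity, and blow-up of $\Phi$), which the paper simply cites from \cite{p<2}, and the blow-up claim $L=\infty$ does rely on the reverse-H\"older self-improvement of $A_\infty$ in the normalization of~\eqref{domain}, so a reference there is in order; and you use implicitly that $\bel{b}_{p,C}$ is interval-independent by rescaling, which is standard but worth a sentence when replacing $Q$ by $J$. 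Otherwise the steps unwind the definitions exactly as intended.
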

\cre{Thus, to estimate $\ve_0(p),$ we need a suitable family $\{b_C\}_{C}$ of minorants of $\bel{b}_{p,C}.$ Just as was done in~\cite{p<2}, we actually find the functions $\bel{b}_{p,C}$ themselves, for all $p>2$ and all sufficiently large $C.$ We proceed as follows: in Section~\ref{new_bellman}, for each suitable choice of $p$ and $C$ we construct the so-called Bellman candidate, denoted $b_{p,C}.$ This construction is more delicate and more technical than the one in~\cite{p<2}, and we briefly discuss the challenges involved. The proof that $b_{p,C}\le \bel{b}_{p,C}$ constitutes Section~\ref{induction}. In Section~\ref{optimizers}, we obtain the converse inequality by demonstrating explicit test functions that realize the infimum in~\eqref{bh}. It is then an easy matter to prove Theorems~\ref{t1} and~\ref{t2.5}, and it is taken up in Section~\ref{nonB}.}

\section{The construction of the Bellman candidate}
\label{new_bellman}
\noindent For $R> 0,$ let
$$
\Gamma_R=\{x\in\mathbb{R}^2:~x_2=Re^{x_1}\}.
$$
Then the domain $\Omega_C$ from~\eqref{domain} is the region in the plane lying between $\Gamma_1$ and $\Gamma_C.$

\subsection{Discussion and preliminaries} \cre{As mentioned earlier, the construction of the Bellman candidate} given here for the case $p>2$ is different from and quite more involved than those presented in~\cite{p<2} for the cases $p=1$ and $p\in(1,2].$ However, the main goal is the same as before and simple to state: we are building the largest locally convex function $b$ on $\Omega_C$ satisfying the boundary condition $b(x_1,e^{x_1})=|x_1|^p.$ 

Let us briefly explain the similarities and differences between the cases $p\in(1,2]$ and $p>2$ (the case $p=1$ is different from both). In both cases the graph of the candidate $b$ is a convex ruled surface, which means that through each point on the graph there passes \cre{a straight-line segment} contained in the graph. \cre{The domain $\Omega_C$ then} splits into a collection of subdomains with disjoint interiors, $\Omega_C=\cup_jR_j,$ such that $b$ is twice differentiable and satisfies the homogeneous \ma equation $b_{x_1x_1}b_{x_2x_2}=b^2_{x_1x_2}$ in the interior of each $R_j.$ In addition, each $R_j$ is foliated by straight-line segments connecting two points of the boundary $\Gamma_1\cup\Gamma_C,$ and each point $x\in {\rm int}(R_j)$ lies on only one such segment, unless $b$ is affine in the whole $R_j.$ We call such segments \ma characteristics of $b.$ \cre{Typically,} if one knows the characteristics everywhere in $\Omega_C,$ one knows the function $b.$

\cre{Thus, to construct the candidate one has to understand} how to split $\Omega_C$ into subdomains and how to foliate each of them, so that the resulting function $b$ is \cre{locally} convex. \cre{If this is done while ensuring certain compatibility conditions, then  $b$ will almost automatically be the {\it largest} locally convex function with the given boundary conditions, as desired.} This is, in general, a difficult task, and the situation is further complicated by the fact that the splitting is \cre{usually} different for different $C.$

Fortunately, there is now a fairly general theory for building \cre{such} foliations on \cre{special non-convex domains such as ours.} It was started in~\cite{sv1} in the context of $\BMO^2;$ much developed \cre{and systematized} in the papers~\cite{iosvz1} and~\cite{iosvz2}, still for \cre{the parabolic strip of} $\BMO^2;$ and is now being adapted to general domains, such as $\Omega_C,$ in~\cite{isvz}. We also mention the recent paper~\cite{sz}, which formalized the theoretical link between Bellman functions and smallest locally concave (or largest locally convex, as is the case here) functions on \cre{the corresponding} domains.

A key building block for many \ma foliations is the tangential foliation. Let us explain this notion in our setting.

For $C\ge1,$ let $\xi=\xi(C)$ be the unique \cre{non-negative} solution of the equation 
$$
e^{-\xi}=C(1-\xi):\quad 0\le\xi<1.
$$
Note that $\xi(1)=0$ and that $\xi$ is strictly increasing with $\lim_{C\to\infty}\xi(C)=1.$
Let
\eq[km0]{
k(z)=\frac{e^z}{1-\xi},\quad z\in\mathbb{R},
}
and define a new function $u=u(x)$ on $\Omega_C$ by the implicit formula
\eq[udef]{
x_2=k(u)(x_1-u)+e^u.
}
This function has simple geometrical meaning, illustrated in Figure~\ref{fig_u_p>2}: if one draws the one-sided tangent to $\Gamma_C$ that passes through $x,$ so that the point of tangency is to the right of $x,$ then this tangent intersects $\Gamma_1$ at the point $(u,e^u),$ while the point of tangency is $(u+\xi,Ce^{u+\xi}).$ In particular $u(0,C)=-\xi.$ (We note that in~\cite{p<2}, $\xi$ and $u$ were called $\xi^+$ and $u^+,$ respectively). 

\begin{figure}[!h!]
\includegraphics[width=13cm]{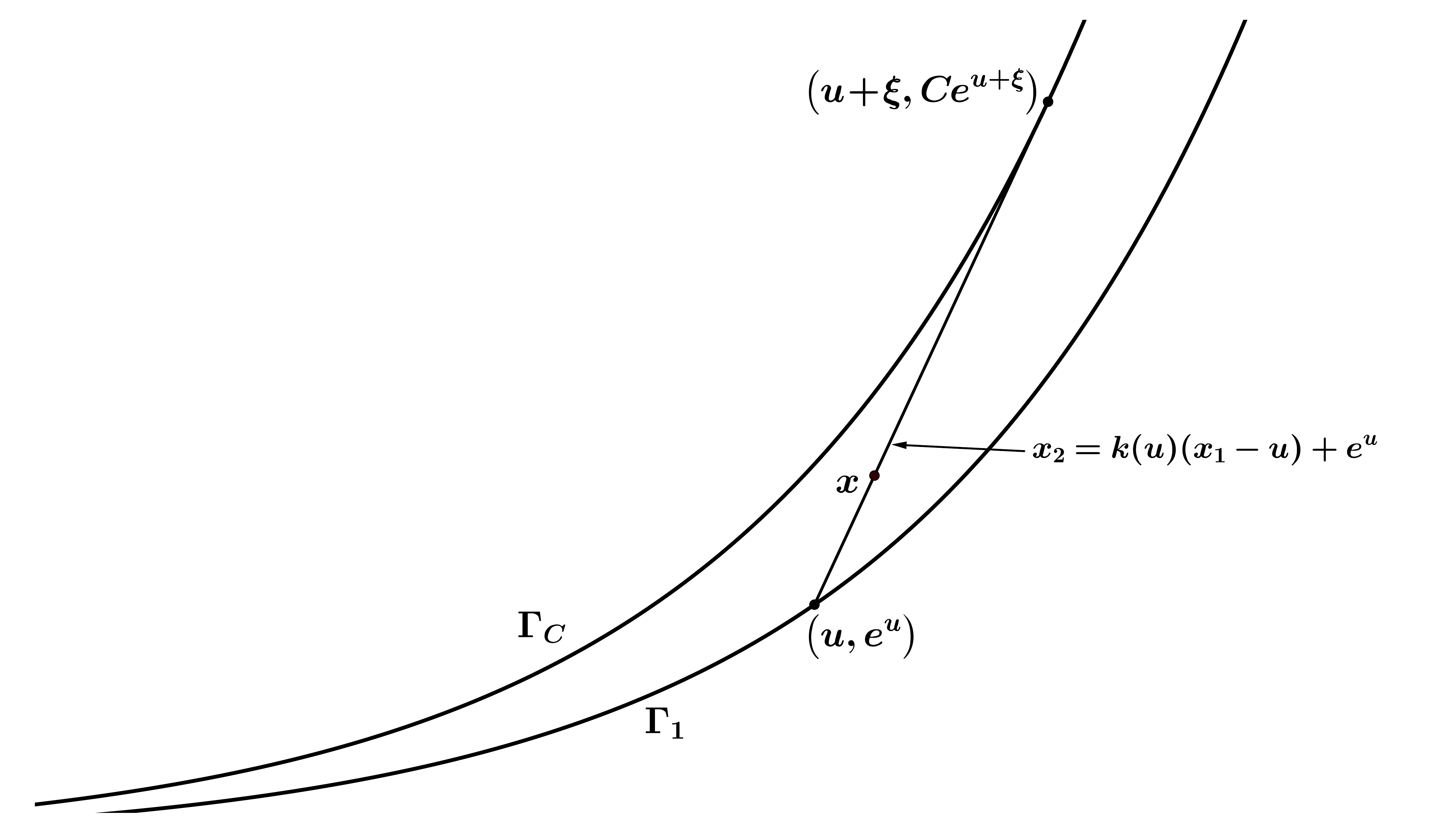}
\caption{The geometric meaning of $u(x)$ and $\xi$}
\label{fig_u_p>2}
\end{figure}

In the case $1<p\le 2,$ if $C$ was large enough, all of $\Omega_C$ was foliated by the tangents~\eqref{udef}, for $u\in(-\infty,\infty);$ \cre{thus, we did not have to split it into subdomains.} However, for $p>2,$ this uniform tangential foliation fails to yield a locally convex function on the whole $\Omega_C,$ for any $C.$ What actually happens --- and, again, only for sufficiently large $C$ ---  \cre{is shown on Figure~\ref{fig_p>2} later in this section. There we have two tangentially foliated subdomains, $R_1$ and $R_3,$ which are linked by a special ``transition regime'' consisting of two more subdomains: $R_4,$ where the candidate is affine and the foliation is thus degenerate, and $R_2,$ where the characteristics are chords connecting two points of $\Gamma_1.$ (In recent Bellman-function literature, these two particular shapes are called ``trolleybus''  and ``cup'', respectively; see~\cite{iosvz1,iosvz2,isvz}.)} This transition regime shrinks as $C$ grows, but never disappears. To show how all this fits together, we need some technical preparation. 

\subsection{Technical lemmas}
\begin{lemma}
\label{in_lunka}
~
\ben
\item[{\rm (1)}]
If $w>0$ and $v\in\big(-w,-w\,\frac{p-1}p\big),$ then 
\eq[ineq_lunka1]{
\frac{w^{p-1}+(-v)^{p-1}}{e^w-e^v}<(p-1)(-v)^{p-2}e^{-v}.
}
\item[{\rm (2)}]
If $0<w\le\frac{p-2}{p-1}$ and $v\in(-w,0),$ then 
\eq[ineq_lunka2]{
\frac{w^{p-1}+(-v)^{p-1}}{e^w-e^v}<(p-1)w^{p-2}e^{-w}.
}
\een
\end{lemma}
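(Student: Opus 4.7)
The overall plan is to substitute $s=-v>0$ in both inequalities and clear the positive denominator $e^w-e^{-s}$. Part~(1) then becomes
\[
(p-1)s^{p-2}(e^{w+s}-1)>w^{p-1}+s^{p-1},\qquad s\in\bigl(\tfrac{p-1}{p}w,\,w\bigr),
\]
and part~(2) becomes
\[
(p-1)w^{p-2}(1-e^{-w-s})>w^{p-1}+s^{p-1},\qquad s\in(0,w),\ w\le\tfrac{p-2}{p-1}.
\]

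For (1) I would expand $e^{w+s}-1=(w+s)+\sum_{n\ge2}(w+s)^n/n!$. Since the tail is strictly positive, it suffices to establish the first-order bound $(p-1)s^{p-2}(w+s)\ge w^{p-1}+s^{p-1}$. Setting $t=w/s\in(1,p/(p-1))$ and dividing by $s^{p-1}$, this reads $g(t):=(p-1)(t+1)-t^{p-1}-1\ge 0$. Since $g'(t)=(p-1)(1-t^{p-2})\le 0$ on $[1,\infty)$, $g$ is non-increasing, so the claim reduces to $g(p/(p-1))=2(p-1)-(p/(p-1))^{p-1}>0$. I would prove $(p/(p-1))^{p-1}<2(p-1)$ for $p>2$ by noting that the two sides are equal at $p=2$ and comparing derivatives in $p$: $\frac{d}{dp}\ln\bigl(2(p-1)\bigr)=\frac{1}{p-1}$ dominates $\frac{d}{dp}\bigl[(p-1)\ln(p/(p-1))\bigr]=\ln(1+\tfrac{1}{p-1})-\tfrac{1}{p}$ because $\ln(1+\tfrac{1}{p-1})<\tfrac{1}{p-1}<\tfrac{1}{p-1}+\tfrac{1}{p}$.

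For (2), let $H(w,s)$ denote the LHS minus RHS of the reformulated inequality. A direct computation gives $\partial_s^2H=-(p-1)w^{p-2}e^{-w-s}-(p-1)(p-2)s^{p-3}<0$ for $p>2$ and $s>0$, so $H(w,\cdot)$ is concave on $[0,w]$ and hence bounded below by $\min\{H(w,0),H(w,w)\}$. I would then apply the strict elementary bound $e^{-x}<1-x+\tfrac{x^2}{2}$ for $x>0$ (verified by checking that $1-x+x^2/2-e^{-x}$ vanishes at $0$ with vanishing first derivative and non-negative second derivative $1-e^{-x}$). With $x=w$ this reduces $H(w,0)>0$ to $(p-1)(1-w/2)>1$, i.e.\ $w<2(p-2)/(p-1)$; with $x=2w$ it reduces $H(w,w)>0$ to $(p-1)(1-w)>1$, i.e.\ $w<(p-2)/(p-1)$. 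Both are satisfied in our range, the latter matching the hypothesis exactly. The main obstacle is precisely this endpoint tightness: the quadratic Taylor bound in (2) and the comparison $(p/(p-1))^{p-1}<2(p-1)$ in (1) are both calibrated to preserve strict inequality right up to $w=(p-2)/(p-1)$ and $t=p/(p-1)$, so any looser elementary estimate would lose the conclusion at the boundary of the stated parameter range.
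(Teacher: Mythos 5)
Your proof is correct. Part~(1) is essentially the paper's argument under a reciprocal change of variable: the paper uses the same linear bound ($e^{w-v}-1\geq w-v$; your $e^{w+s}-1>w+s$ with $w+s=w-v$), sets $\theta=-v/w=1/t$, and reduces to $(p-2)\theta^{p-1}+(p-1)\theta^{p-2}-1>0$ on $\bigl(\tfrac{p-1}{p},1\bigr)$, which is precisely your $g(t)>0$ after multiplying through by $\theta^{p-1}>0$. The paper's endpoint value $2\tfrac{(p-1)^{p}}{p^{p-1}}-1$, zero at $p=2$, is the same quantity as your $2(p-1)-(p/(p-1))^{p-1}$; the paper merely asserts monotonicity in $p$ where you supply the logarithmic-derivative comparison.

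Part~(2) is where you genuinely diverge. Both proofs rely on the quadratic bound $e^{-x}<1-x+\tfrac{x^2}{2}$, but the paper applies it once in a short chain that needs no second derivative: writing the target as $w^{p-1}+(-v)^{p-1}<(p-1)w^{p-2}\bigl(1-e^{-(w-v)}\bigr)$, it observes
\[
1-e^{-(w-v)}>(w-v)\Bigl[1-\tfrac12(w-v)\Bigr]>(w-v)(1-w)\geq\frac{w-v}{p-1},
\]
where the middle step uses $-v<w$ and the last uses $w\leq\tfrac{p-2}{p-1}$, and then closes with the elementary $w^{p-1}+(-v)^{p-1}<w^{p-2}(w-v)$. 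Your approach --- fix $w$, note $\partial_s^2 H<0$, and check $s=0$ and $s=w$ with the same Taylor bound at $x=w$ and $x=2w$ --- is a valid alternative and arrives at the same two sufficient conditions. The paper's chain is shorter and avoids the concavity machinery; yours is more systematic and, as you observe, makes transparent at which endpoint ($s=w$) the hypothesis on $w$ is sharp. Both close the same gap.
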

\begin{proof}
For Part (1), note that $e^{w-v}-1\ge w-v>0,$ and so
it is sufficient to check that
$$
w^{p-1}+(-v)^{p-1}<(p-1)(-v)^{p-2}(w-v)\,.
$$
Put $\theta=\cre{-\frac v{w}};$ then this inequality becomes
$$
(p-2)\theta^{p-1}+(p-1)\theta^{p-2}-1>0\,,\qquad\frac{p-1}p <\theta<1\,.
$$
The left-hand side is increasing in $\theta,$ and equals 
$
2\frac{(p-1)^{\cre{p}}}{p^{p-1}}-1
$
when $\theta=\frac{p-1}p.$ 
In turn, this is an increasing function of $p,$ equal to 0 at $p=2$. 

For Part (2), observe that since $w>-v,$ and $1-w\ge 1-\frac{p-2}{p-1}=\frac1{p-1},$ we have
$$
1-e^{-(w-v)}> (w-v)\Big[1-\frac12(w-v)\Big]>(w-v)(1-w)\ge\frac{w-v}{p-1},
$$
and \eqref{ineq_lunka2} follows from the obvious relation
$
w^{p-1}+(-v)^{p-1}<w^{p-2}(w-v).
$
\end{proof}

For any $v<0$ and $w>0,$ let
\eq[rq]{
r(u,w)=\frac{e^w-e^v}{w-v},\qquad q(v,w)=\frac{w^p-(-v)^p}{w-v}.
}
\begin{lemma}
\label{lemma_lunka}
For each $w\in\big(0,\frac{p-2}{3p}\big),$ there exists a unique $v\in\big(-w,-w\,\frac{p-1}p\big)$ such that
\eq[eq_lunka]{
\frac{q(v,w)+p(-v)^{p-1}}{r(v,w)-e^v}=\frac{pw^{p-1}-q(v,w)}{e^w-r(v,w)}=p\,\frac{w^{p-1}+(-v)^{p-1}}{e^w-e^v}.
}
\end{lemma}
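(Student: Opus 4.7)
The plan is to prove the lemma by reducing the system in~\eqref{eq_lunka} to a single scalar equation and then establishing unique solvability by monotonicity and intermediate-value arguments. By the mediant identity (if $a_i,b_i>0$ then $a_1/b_1=a_2/b_2$ iff each equals $(a_1+a_2)/(b_1+b_2)$), with $a_1:=q+p(-v)^{p-1}$, $b_1:=r-e^v$, $a_2:=pw^{p-1}-q$, $b_2:=e^w-r$, the identities $a_1+a_2=p(w^{p-1}+(-v)^{p-1})$ and $b_1+b_2=e^w-e^v$ reduce~\eqref{eq_lunka} to the single equation $a_1/(a_1+a_2)=b_1/(b_1+b_2)$. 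Setting $v=-\theta w$ with $\theta\in((p-1)/p,1)$ and $\tau:=w(1+\theta)$, elementary algebra (using $r=(e^w-e^v)/(w-v)$ and $e^v/(e^w-e^v)=1/(e^{w-v}-1)$) puts this into the form $\nu(\theta)=\mu(\tau)$, where
\[
\nu(\theta):=\frac{1+p\theta^{p-1}+(p-1)\theta^p}{p(1+\theta)(1+\theta^{p-1})},\qquad \mu(\tau):=\frac1\tau-\frac1{e^\tau-1}.
\]

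The function $\mu$ is strictly decreasing on $(0,\infty)$ from $\mu(0^+)=1/2$ to $0$, since $\mu'<0$ reduces to the elementary inequality $\tau<2\sinh(\tau/2)$; thus $\theta\mapsto\mu(w(1+\theta))$ is strictly decreasing on $[(p-1)/p,1]$. For $\nu$, direct differentiation and simplification yield
\[
p(1+\theta)^2(1+\theta^{p-1})^2\,\nu'(\theta)=(p-1)^2\theta^{p-2}(1+\theta)^2-(1+\theta^{p-1})^2,
\]
a difference of two squares that factors as $H(\theta)\cdot\bigl[(p-1)\theta^{(p-2)/2}(1+\theta)+(1+\theta^{p-1})\bigr]$ with
\[
H(\theta):=(p-1)\theta^{(p-2)/2}(1+\theta)-(1+\theta^{p-1}).
\]
The bracketed factor is positive, so $\operatorname{sgn}(\nu')=\operatorname{sgn}(H)$. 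A further differentiation gives $H'(\theta)=\tfrac{p-1}{2}\theta^{(p-4)/2}\bigl[(p-2)+p\theta-2\theta^{p/2}\bigr]$, whose bracket is positive on $[0,1]$ (value $p-2>0$ at $\theta=0$; derivative $p(1-\theta^{(p-2)/2})>0$). So $H$ strictly increases on $(0,1]$ from $H(0^+)=-1$ to $H(1)=2(p-2)>0$, hence has a unique zero $\theta_0\in(0,1)$. A short calculation shows $H((p-1)/p)>0$ for every $p>2$ (it reduces to the inequality $(2p-1)((p-1)/p)^{p/2}>1+((p-1)/p)^{p-1}$, which holds with equality at $p=2$ and strictly for $p>2$ by differentiating in~$p$); this places $\theta_0<(p-1)/p$ and makes $\nu$ strictly increasing on $[(p-1)/p,1]$.

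It follows that $F(\theta):=\nu(\theta)-\mu(w(1+\theta))$ is strictly increasing on $[(p-1)/p,1]$. Since $\nu(1)=1/2>\mu(2w)$, $F(1)>0$. At the left endpoint, a short estimate using the hypothesis $w<(p-2)/(3p)$ together with the bound $\mu(\tau)>1/2-\tau/12$ for $\tau>0$ (or, alternatively, invoking Lemma~\ref{in_lunka}) produces $\mu(w(2p-1)/p)>\nu((p-1)/p)$, i.e.~$F((p-1)/p)<0$. The intermediate value theorem then yields exactly one $\theta\in((p-1)/p,1)$ with $F(\theta)=0$; the corresponding $v=-\theta w$ is the unique point required.

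The main technical obstacle is the inequality $H((p-1)/p)>0$, without which one cannot conclude that $\nu'$ has constant sign on the whole of $((p-1)/p,1)$ and hence cannot rule out multiple solutions; its verification is a careful but elementary one-variable calculation in $p$. The hypothesis $w<(p-2)/(3p)$ enters only at the left-endpoint sign, ensuring $F((p-1)/p)<0$.
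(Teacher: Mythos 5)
Your reduction via the mediant identity is correct, and it recasts the problem in a cleaner, decoupled form than the paper uses: writing the condition as $\nu(\theta)=\mu(\tau)$ with $\tau=w(1+\theta)$ cleanly separates the algebraic part ($\nu$) from the transcendental one ($\mu$). I have verified the key identity $p(1+\theta)^2(1+\theta^{p-1})^2\nu'(\theta)=(p-1)^2\theta^{p-2}(1+\theta)^2-(1+\theta^{p-1})^2$, the factorization through $H$, the monotonicity of $H$, and the formula $\mu(\tau)=\tfrac1\tau-\tfrac1{e^\tau-1}$ being strictly decreasing from $1/2$. So the skeleton is sound.

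The route is genuinely different from the paper's. The paper reduces to the single equation $F(v,w)=0$ (equation~\eqref{F}), proves existence by checking signs of $F$ at the two endpoints $v=-w$ and $v=-\lambda w$ (with $\lambda=\tfrac{p-1}{p}$), and proves uniqueness by a \emph{local} argument: evaluating $F_v$ at any root and showing it is negative via part (1) of Lemma~\ref{in_lunka}. You instead prove \emph{global} strict monotonicity of $\nu-\mu$ on all of $[\lambda,1]$, which subsumes both existence (with the endpoint signs) and uniqueness. Your version is arguably more transparent, and the function $\nu(\theta)$ makes visible the quantity the paper manipulates implicitly (the paper's expression $\frac{1+p\lambda^{p-1}+(p-1)\lambda^p}{p(1+\lambda)(1+\lambda^{p-1})}$ is exactly your $\nu(\lambda)$). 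The cost of this approach is that you must establish the auxiliary inequality $H(\lambda)>0$ for all $p>2$, which the paper never needs because its uniqueness argument is local.

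Two steps are asserted more briskly than they deserve. First, $H(\lambda)>0$, equivalent to $(2p-1)\lambda^{p/2}>1+\lambda^{p-1}$: saying this ``holds by differentiating in $p$'' only confirms the sign of $g'(2)$, not that $g'>0$ on all of $(2,\infty)$, and since $\lambda$ depends on $p$ the derivative is not pleasant. A cleaner route is to use $\lambda^{p-1}<\lambda^{p/2}$ (since $p-1>p/2$ and $0<\lambda<1$) to reduce to $2(p-1)\lambda^{p/2}>1$, and then show $h(p):=\log 2+\log(p-1)+\tfrac p2\log\tfrac{p-1}p$ satisfies $h(2)=0$ and $h'(p)=\tfrac{3}{2(p-1)}+\tfrac12\log\tfrac{p-1}p>0$, which follows from $\log(1+t)<3t$ with $t=\tfrac1{p-1}$. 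Second, the left-endpoint inequality $\nu(\lambda)<\mu(w(1+\lambda))$ is precisely where the paper invests its effort and where the hypothesis $w<\tfrac{p-2}{3p}$ is actually used; your ``short estimate using $\mu(\tau)>\tfrac12-\tfrac\tau{12}$'' is plausible (and I checked it numerically for $p$ near $2$) but is not carried out, whereas the paper's sharper bound $\tfrac{t}{e^t-1}-1<-\tfrac{t}{t+2}$ reduces the required inequality to an explicit algebraic one in $\lambda$ that is verified to the end. Also, Lemma~\ref{in_lunka} as stated doesn't directly give the left-endpoint sign; it would need an adaptation, so the parenthetical alternative should be dropped or made precise. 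With these two gaps filled, your argument is a valid and somewhat more structured alternative to the paper's.
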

\begin{proof}
Observe that it is enough to show only the first equality in~\eqref{eq_lunka}, as the second one then follows by elementary rearrangement. 
In turn, this first equality is equivalent to the statement
\eq[F]{
F(v,w):=(e^w-e^v)(w^p-(-v)^p-pw^{p-1}-p(-v)^{p-1})+
p(w-v)(w^{p-1}e^v+(-v)^{p-1}e^w)=0.
}
Assume $w\in(0,\frac{p-2}{3p})$ and let $\lambda=\frac{p-1}p.$ To show that there exists $v\in(-w,-\lambda w)$ such that $F(v,w)=0,$ we compare the signs of $F(-w,w)$ and $F(-\lambda w,w).$ 

Since $F(-w,w)=4pw^{p-1}(w\cosh w-\sinh w)>0,$ we want to check that $F(-\lambda w,w)<0.$ To that end,
\begin{align*}
F(-\lambda w,w)&=(e^w-e^{-\lambda w})\big[(1-\lambda^p)w^p-p(1+\lambda^{p-1})w^{p-1}\big]
+p(1+\lambda)w^p\big[e^{-\lambda w}+\lambda^{p-1}e^w\big],
\end{align*}
and the inequality $F(-\lambda w,w)<0$ is equivalent to
$$
\frac{(1+\lambda)w}{e^{(1+\lambda)w}-1}-1+\frac{1+p\lambda^{p-1}+(p-1)\lambda^p}
{p(1+\lambda^{p-1})}w<0.
$$
Let
$$
\psi(t)=\frac t{e^t-1}-1+\frac{1+p\lambda^{p-1}+(p-1)\lambda^p}
{p(1+\lambda)(1+\lambda^{p-1})}t\,.
$$
We would like to show that $\psi(t)<0$ for $t\in(0,\frac{p-2}{3p}(1+\lambda))$.
Note that for $t>0,$
$$
e^t>1+t+\frac12t^2\quad\implies\quad\frac t{e^t-1}-1<-\frac t{t+2}\,.
$$
Therefore, \cre{for $t>0,$}
$$
\psi(t)<-t\Big[\frac1{t+2}-\frac{1+p\lambda^{p-1}+(p-1)\lambda^p}{p(1+\lambda)
(1+\lambda^{p-1})}\Big],
$$
and it is sufficient to check that
$$
\frac{p-2}{3p}(1+\lambda)<\frac{p(1+\lambda)(1+\lambda^{p-1})}{1+p\lambda^{p-1}
+(p-1)\lambda^p}-2=
\frac{(p-2)(1-\lambda^p)+p\lambda(1-\lambda^{p-2})}{1+p\lambda^{p-1}+(p-1)\lambda^p}.
$$
Since
$
(p-2)(1-\lambda^p)+p\lambda(1-\lambda^{p-2})>
(p-2)(1+\lambda)(1-\lambda^{p-1})
$
and $\lambda^p<\lambda^{p-1},$
it is enough to verify that
$$
3>\frac{1/p+(2-1/p)\lambda^{p-1}}{1-\lambda^{p-1}}=\frac{2}{1-\lambda^{p-1}}-2+1/p.
$$
This is so because the right-hand side is decreasing in $p$ and equals $\frac52$ when $p=2.$ This proves that the desired $v$ exists for each $w.$

To show that $v$ is unique, we differentiate the function $F$ with respect to $v.$ This derivative can be written as follows: 
\begin{align*}
F_v(v,w)&=\big(e^w(w-v)-e^w+e^v\big)e^v\left(\frac{pw^{p-1}(w-v)-w^p+(-v)^p}{e^w(w-v)-e^w+e^v}-p(p-1)(-v)^{p-2}e^{-v}\right)\\
&=\big(e^w(w-v)-e^w+e^v\big)e^v\left(p\,\frac{w^{p-1}+(-v)^{p-1}}{e^w-e^v}-p(p-1)(-v)^{p-2}e^{-v}\right),
\end{align*}
where we used the second equality in~\eqref{eq_lunka}. Now, the first factor is positive, because the function $t\mapsto e^t$ is \cre{strictly} convex, while the last factor is negative by~\eqref{ineq_lunka1}.
\cre{Therefore, $F_v(v,w)$ is negative for any root $v$ of the equation $F(v,w)=0$ that lies in the interval $\big(-w,-w\,\frac{p-1}p\big),$ which is possible only when such a root is unique.}
\end{proof}

From now on, in using $v$ and $w$ we will always presume that $w\in(0,\frac{p-2}{3p}),$ $v\in(-w,-w\,\frac{p-1}p),$ and the pair $\{v,w\}$ is related by equation~\eqref{eq_lunka}. For such $v$ and $w,$ each of the three equal quantities in~\eqref{eq_lunka} \cre{is a function of $w,$} and it is convenient to give them a common name. Let
\eq[D]{
D(w)=\frac{q(v,w)+p(-v)^{p-1}}{r(v,w)-e^v}=\frac{pw^{p-1}-q(v,w)}{e^w-r(v,w)}=p\,\frac{w^{p-1}+(-v)^{p-1}}{e^w-e^v},
}
and $D$ is a function of $w$ defined on the interval $\big(0,\frac{p-2}{3p}\big).$ Let us list some of its properties.
\begin{lemma}
\label{lemma_d}
We have
\eq[ineq_d1]{
D(w)< p(p-1)(-v)^{p-2}e^{-v}
}
and
\eq[ineq_d2]{
D(w)< p(p-1)w^{p-2}e^{-w}.
}
Furthermore, \cre{$D'>0$ on} $\big(0,\frac{p-2}{3p}\big).$
\end{lemma}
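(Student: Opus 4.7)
The first two claims \eqref{ineq_d1} and \eqref{ineq_d2} are immediate from Lemma~\ref{in_lunka}. Indeed, $p>2$ yields $\frac{p-2}{3p}<\frac{p-2}{p-1}$, so $w\in\bigl(0,\frac{p-2}{3p}\bigr)$ satisfies the hypotheses of both parts of that lemma, while $v\in(-w,0)$ is the required range for $v$. Multiplying \eqref{ineq_lunka1} and \eqref{ineq_lunka2} through by $p$ and using the third expression for $D$ in \eqref{D} yields the two bounds at once.

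For the monotonicity, my plan is to differentiate the identity $D(w)\,(e^w-e^v)=p\bigl(w^{p-1}+(-v)^{p-1}\bigr)$ with respect to $w$, treating $v=v(w)$ as an implicit function of $w$ via $F(v(w),w)=0$. Collecting terms yields
$$
(e^w-e^v)\,D'(w)=p\bigl[(p-1)w^{p-2}-\tfrac{1}{p}De^w\bigr]-p\,v'(w)\bigl[(p-1)(-v)^{p-2}-\tfrac{1}{p}De^v\bigr].
$$
Both bracketed factors are strictly positive, by \eqref{ineq_d2} and \eqref{ineq_d1} respectively. Thus it suffices to establish $v'(w)<0$, and since $v'(w)=-F_w/F_v$ with $F_v<0$ (already proved in the preceding lemma), the problem reduces to showing $F_w(v,w)<0$.

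My approach to the sign of $F_w$ mirrors the approach used for $F_v$ in the previous proof. Differentiate \eqref{F} directly in $w$, then eliminate the awkward term $w^p-(-v)^p$ using the identity
$$
w^p-(-v)^p+p(-v)^{p-1}(w-v)=D\,\bigl[e^w-e^v(1+w-v)\bigr],
$$
which is just the first equality of \eqref{eq_lunka} cleared of denominators. After this substitution the two cross-terms in $(-v)^{p-1}(w-v)$ cancel, and the remainder factors as
$$
F_w(v,w)=\bigl[e^w-e^v(1+w-v)\bigr]\bigl[e^wD(w)-p(p-1)w^{p-2}\bigr].
$$
The first factor is positive by strict convexity of $e^x$ (the tangent to $e^x$ at $v$ lies strictly below the curve at $w$), and the second is negative by \eqref{ineq_d2}. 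Hence $F_w<0$, so $v'(w)<0$, and therefore $D'(w)>0$.

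The main technical hurdle is this last bookkeeping step: the raw expansion of $F_w$ contains several terms of mixed sign, and only the right grouping --- together with the substitution coming from \eqref{eq_lunka} --- makes the product factorization visible. Once the product structure is exhibited, all sign information comes from the inequalities \eqref{ineq_d1} and \eqref{ineq_d2} just proved, in exact parallel with the previous lemma's use of \eqref{ineq_lunka1} to pin down the sign of $F_v$.
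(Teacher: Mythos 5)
Your proof is correct, but it takes a genuinely different route from the paper's. For the monotonicity of $D,$ the paper exploits the representation $D=\frac{q-f'}{r-g'}$ together with the identity $q'(r-g')=r'(q-f')$ (which is exactly~\eqref{form}); applying the quotient rule, the cross terms then cancel, leaving
$$
D'=\frac{g''}{r-g'}\Big(\frac{q-f'}{r-g'}-\frac{f''}{g''}\Big)=\frac{g''}{r-g'}\Big(D-p(p-1)w^{p-2}e^{-w}\Big),
$$
from which $D'>0$ follows using \emph{only}~\eqref{ineq_d2} (plus the strict convexity of $g$). This avoids any need to determine the sign of $v'(w).$ You instead differentiate the third representation $D(w)(e^w-e^v)=p\big(w^{p-1}+(-v)^{p-1}\big)$ directly, which forces you to establish $v'(w)<0,$ i.e.\ $F_w<0$ (given $F_v<0$ from the proof of Lemma~\ref{lemma_lunka}). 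I checked your factorization
$$
F_w(v,w)=\big[e^w-e^v(1+w-v)\big]\big[e^wD(w)-p(p-1)w^{p-2}\big]
$$
and it is correct: collecting the $e^w$ and $e^v$ coefficients of $F_w$ and substituting the first equality of~\eqref{eq_lunka} cleared of denominators, the common factor $e^w-e^v(1+w-v)$ emerges exactly as you say. Your approach thus uses \emph{both}~\eqref{ineq_d1} and~\eqref{ineq_d2} and requires the extra computation of $F_w,$ but it has the virtue of running entirely in parallel with the paper's own argument for the sign of $F_v,$ making the two sign computations structurally identical; the paper's route is shorter and more opaque, relying on the incidental algebraic miracle encoded in~\eqref{form}. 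One small wording issue: after the substitution, the terms $p(w-v)(-v)^{p-1}$ do not literally ``cancel'' against anything --- they are absorbed into the $D[\cdots]$ expression --- but this does not affect the validity of the final factorization.
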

\begin{proof}
Inequalities~\eqref{ineq_d1} and~\eqref{ineq_d2} come directly from~\eqref{ineq_lunka1} and~\eqref{ineq_lunka2}, respectively (note that $\frac{p-2}{3p}<\frac{p-2}{p-1},$ so~\eqref{ineq_lunka2} applies). 

\cre{To check the sign of $D,$} we will treat $q$ and $r$ as functions of $w$ and use the prime to indicate the total derivative with respect to $w.$ Thus, $q'=q_w+q_vv_w$ and $r'=r_w+r_vv_w,$ where $v_w$ can be computed from \cre{equation~ \eqref{F}.} Also denote $f(w):=w^p$ and $g(w):=e^w.$ 

We will need the simple 
but key fact that equation~\eqref{eq_lunka} can be written as
\eq[form]{
\frac{q'}{r'}=\frac{q-f'}{r-g'}=D.
}
Using this identity, we have
\begin{align*}
D'&=\Big(\frac{q-f'}{r-g'}\Big)'=\frac{(q'-f'')(r-g')-(q-f')(r'-g'')}{(r-g')^2}\\
&=\frac{g''(q-f')-f''(r-g')}{(r-g')^2}
=\frac{g''}{r-g'}\,\Big( \frac{q-f'}{r-g'}-\frac{f''}{g''}\Big).
\end{align*}
Since $g$ is \cre{strictly} convex, we have $g''>0$ and $r-g'<0.$ 
On the other hand, the expression in parentheses is negative by ~\eqref{ineq_d2}. 
\end{proof}
For $p>2,$ let
\eq[xicr]{
\xi_0(p)=\xicr,\qquad C_0(p)=\frac{e^{-\xi_0(p)}}{1-\xi_0(p)}.
}
\begin{lemma}
\label{lemma_tr}
Assume $\xi>\xi_0(p).$ Let 
\cre{$$
c_1=\xi\big[e(1-\xi)\Gamma(p-1)\big]^{1/(p-2)}, \qquad c_2=\xi\big[2e(1-\xi)\Gamma(p)\big]^{1/(p-2)}.
$$ }
Then the equation
\eq[tr0]{
\Big(\frac1\xi-1\Big)\int_{w}^\infty  s^{p-2}e^{-s/\xi}\,ds-w^{p-2}e^{-w/\xi}=0
}
has a unique solution $w_*$ in the interval $(0,c_1).$ 

Furthermore, the equation
\eq[trolleybus]{
\Big(\frac1\xi-1\Big)p(p-1)e^{\cre{w}(1/\xi-1)}\int_{w}^\infty s^{p-2}e^{-s/\xi}\,ds=D(w)
}
has a unique solution $\bcr$ in the interval $(w_*,c_2).$
\end{lemma}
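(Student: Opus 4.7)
The plan is to handle each of the two assertions by an intermediate-value argument combined with a monotonicity check, with the root $w_*$ of the first equation feeding into the analysis of the second.

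For \eqref{tr0}, denote its left-hand side by $H(w)$. An easy evaluation gives $H(0^+) = (1-\xi)\xi^{p-2}\Gamma(p-1) > 0$, while a direct differentiation yields $H'(w) = w^{p-3}e^{-w/\xi}\bigl(w-(p-2)\bigr)$, so $H$ strictly decreases on $(0, p-2)$. The hypothesis $\xi > \xi_0(p)$ gives $1-\xi < 1/[3^{p+2}\Gamma(p)]$, from which $c_1 < p-2$ follows by inspection of the explicit formula for $c_1$. The heart of the first half is to verify $H(c_1) < 0$: I integrate by parts once in the tail gamma integral to obtain the alternative form
\[
H(w) = -\xi w^{p-2}e^{-w/\xi} + (1-\xi)(p-2)\int_w^\infty s^{p-3}e^{-s/\xi}\,ds,
\]
substitute $w = c_1$, use the defining identity $c_1^{p-2} = \xi^{p-2}e(1-\xi)\Gamma(p-1)$, and bound the remaining tail above by $\xi^{p-2}\Gamma(p-2)$. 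After simplification, the sign of $H(c_1)$ is controlled by the scalar inequality $\xi e^{1-c_1/\xi} > 1$, which holds because $\xi > \xi_0(p)$ forces $c_1/\xi \le [e/(3^{p+2}(p-1))]^{1/(p-2)}$, a quantity bounded well away from $1$ uniformly in $p>2$. Combined with strict monotonicity, this pins down the unique $w_* \in (0, c_1)$.

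For \eqref{trolleybus}, let $G(w)$ denote the left-hand side minus $D(w)$. Using $H(w_*)=0$ to replace $(1/\xi - 1)\int_{w_*}^\infty s^{p-2}e^{-s/\xi}\,ds$ by $w_*^{p-2}e^{-w_*/\xi}$ and collapsing the exponentials gives
\[
G(w_*) = p(p-1)w_*^{p-2}e^{-w_*} - D(w_*),
\]
which is strictly positive by Lemma~\ref{lemma_d}, inequality~\eqref{ineq_d2} (applicable because $w_*<c_1\le (p-2)/(p-1)$, again verified from the hypothesis on $\xi$). At the right endpoint, a similar integration-by-parts estimate, this time exploiting $c_2^{p-2} = \xi^{p-2}\cdot 2e(1-\xi)\Gamma(p)$ together with the upper bound on the integrand, reduces $G(c_2) < 0$ to a scalar inequality that is again satisfied under $\xi > \xi_0(p)$. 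For uniqueness I differentiate: letting $A(w)$ denote the integral-containing term, a short computation gives $A'(w) = (1/\xi - 1)p(p-1)e^{w(1/\xi-1)}H(w)$, which is negative on $(w_*, c_2)$ since $H$ strictly decreases through zero at $w_*$ and $c_2<p-2$. Since $D' > 0$ on $(0, (p-2)/(3p))$ by Lemma~\ref{lemma_d} and $c_2 < (p-2)/(3p)$, one gets $G' = A' - D' < 0$ on $(w_*, c_2)$, yielding the unique $\bcr$.

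The main obstacle is verifying the two boundary inequalities $H(c_1) < 0$ and $G(c_2) < 0$. The constants $c_1, c_2$ and the threshold $\xi_0(p) = 1 - 1/[3^{p+2}\Gamma(p)]$ appear precisely tuned so that, after integration by parts and substitution of the defining identities, the resulting scalar inequalities become valid with some slack; keeping track of which estimates must be tight versus merely crude---especially for $p$ close to $2$, where $c_1$ and $p-2$ both tend to $0$ and several auxiliary comparisons such as $c_1 < (p-2)/(p-1)$ become marginal---will be the principal technical burden.
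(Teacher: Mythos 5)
Your high-level skeleton (intermediate-value argument at both endpoints plus monotonicity for uniqueness) matches the paper, and your observation that $A'(w)$ is a positive multiple of the left-hand side of~\eqref{tr0}, hence vanishes at $w_*$ and is negative beyond, is exactly the paper's uniqueness argument for $\bcr$. The derivative formula $H'(w)=w^{p-3}e^{-w/\xi}(w-(p-2))$ and the evaluation $G(w_*)=p(p-1)w_*^{p-2}e^{-w_*}-D(w_*)>0$ via~\eqref{ineq_d2} are also correct and coincide with the paper.

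There are, however, two concrete problems. First, for $H(c_1)<0$ the paper does \emph{not} integrate by parts: it bounds $\int_{c_1/\xi}^\infty s^{p-2}e^{-s}\,ds<\Gamma(p-1)$ directly and uses the defining identity $c_1^{p-2}=\xi^{p-2}e(1-\xi)\Gamma(p-1)$ to land on $H(c_1)<\xi^{p-2}(1-\xi)\Gamma(p-1)\bigl(1-e^{1-c_1/\xi}\bigr)$, which is negative as soon as $c_1<\xi$ --- a one-line inequality already established. Your integration-by-parts detour drops the boundary term $(1-\xi)w^{p-2}e^{-w/\xi}$ and lands you on the strictly weaker requirement $\xi\,e^{1-c_1/\xi}>1$; this is still true under $\xi>\xi_0(p)$, but you have traded a trivial inequality for one that needs a uniform-in-$p$ bound on $c_1/\xi$ which you only assert. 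The paper's route is cleaner and avoids that discussion altogether.

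Second, and more seriously, you wave off $G(c_2)<0$ as ``a similar integration-by-parts estimate,'' but it is not similar. The paper's endpoint argument there rests on a lower bound for $D$ that is nowhere in your outline and is not supplied by Lemma~\ref{lemma_d} (which gives only upper bounds and monotonicity): namely $D(w)>\tfrac12\,p\,w^{p-2}e^{-w}$, proved on the spot from $1-e^{v-w}<1-e^{-2w}<2w$. Without some explicit lower bound on $D(c_2)$, upper-bounding the left-hand side of~\eqref{trolleybus} at $c_2$ gives you nothing to compare it to, and the inequality $A(c_2)<D(c_2)$ does not close. You need to identify and prove this (or an equivalent) lower bound. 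Relatedly, both $D(w_*)$ and $D(c_2)$ must lie in the domain of $D$, which is $\bigl(0,\tfrac{p-2}{3p}\bigr)$; the relevant fact to verify is $c_2<\tfrac{p-2}{3p}$ (from which $c_1<c_2<p-2$ follows), and the paper proves this explicitly while you only assert the weaker $c_1\le\tfrac{p-2}{p-1}$ ``from the hypothesis on $\xi$.'' That chain needs to be spelled out to make your uses of Lemma~\ref{lemma_d} legitimate.
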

\begin{proof}
First observe that $c_1<c_2<\xi.$ The first inequality is trivial, while the second is equivalent to
$
\xi>1-\frac1{2e\Gamma(p)},
$
which is clearly satisfied by the assumption $\xi>\xi_0(p).$ Second, we have $c_2<\frac{p-2}{3p}.$ Indeed, this inequality is equivalent to
$$
\xi>1-\frac{\big(1-\frac2p\big)^{p-2}}{2e\,(3\xi)^{p-2}\Gamma(p)}.
$$
Since $\xi<1$ and $\big(1-\frac2p\big)^{p-2}>e^{-2},$ this inequality is weaker than $\xi>1-\frac9{2e^3 3^p\Gamma(p)},$ which is in turn weaker than $\xi>\xi_0(p).$

Consider equation~\eqref{tr0}. When $w=0,$ the left-hand side of~\eqref{tr0} is positive. For $w=c_1,$
\begin{align*}
\Big(\frac1\xi-1\Big)&\int_{c_1}^\infty  s^{p-2}e^{-s/\xi}\,ds-c_1^{p-2}e^{-c_1/\xi}=
(1-\xi)\xi^{p-2}\int_{c_1/\xi}^\infty s^{p-2}e^{-s}\,ds-c_1^{p-2}e^{-c_1/\xi}\\
&
\cre{<(1-\xi)\xi^{p-2}\Gamma(p-1)-c_1^{p-2}e^{-c_1/\xi}
=\xi^{p-2}(1-\xi)\Gamma(p-1)\big(1-e^{1-c_1/\xi}\big)<0,}
\end{align*}
since $c_1<\xi.$ 
Thus, a solution $w_*\in(0,c_1)$ exists. To prove that it is unique, we note that the left-hand side of~\eqref{tr0} is decreasing in $w$ for $w\in(0,p-2),$ and that $c_1<p-2.$

Turning to~\eqref{trolleybus}, for $w=w_*$ we have, by~\eqref{tr0} and~\eqref{ineq_d2},
$$
\Big(\frac1\xi-1\Big)p(p-1)e^{w_*(1/\xi-1)}\int_{w_*}^\infty s^{p-2}e^{-s/\xi}\,ds=
p(p-1)w_*^{p-2}e^{-w_*}>D(w_*).
$$
Observe that for any $w,$ since $1-e^{v-w}<1-e^{-2w}<2w,$
$$
D(w)=p\,\frac{w^{p-1}+(-v)^{p-1}}{e^w-e^v}>pe^{-w}\frac{w^{p-1}}{1-e^{v-w}}>\frac12\,pe^{-w}w^{p-2}.
$$
Therefore, putting $w=c_2$ in \cre{the left-hand side of}~\eqref{trolleybus} we get
\begin{align*}
\Big(\frac1\xi-1\Big)p(p-1)e^{c_2(1/\xi-1)}\int_{c_2}^\infty s^{p-2}e^{-s/\xi}\,ds<
(1-\xi)\xi^{p-2}p\Gamma(p)e^{1-c_2}=\frac12\,pe^{-c_2}c_2^{p-2}<D(c_2),
\end{align*}
and\cre{, hence,} a solution $\bcr\in(w_*,c_2)$ exists. To prove uniqueness, observe that \cre{the derivative of the left-hand side of~\eqref{trolleybus} is a positive multiple of the left-hand side of~\eqref{tr0}; thus, it equals zero at $w=w_*$ and is decreasing for $w\in(0,p-2);$ in particular, it is negative for $w> w_*.$ 
Therefore, the left-hand side of~\eqref{trolleybus} is decreasing in $w$ for $w\ge w_*,$ while by Lemma~\ref{lemma_d}, the right-hand side is increasing.}
\end{proof}
\begin{remark}
In what follows, in addition to $\bcr,$ we will also use $\acr,$ which is the unique solution of the equation $F(v,\bcr)=0$ guaranteed by Lemma~\ref{lemma_lunka}.
\end{remark}
\subsection{The Bellman candidate.}
As mentioned earlier, we now split domain $\Omega_C$ into four subdomains, $\Omega_C=\cup_{j=1}^4 R_j.$ In addition to the numbers $\acr$ and $\bcr$ given by Lemma~\ref{trolleybus}, the definition below uses the function $k$ from~\eqref{km0} \cre{and function $r$ from~\eqref{rq}.} The splitting is pictured in Figure~\ref{fig_p>2}.
\begin{figure}[!h!]
\includegraphics[width=18cm]{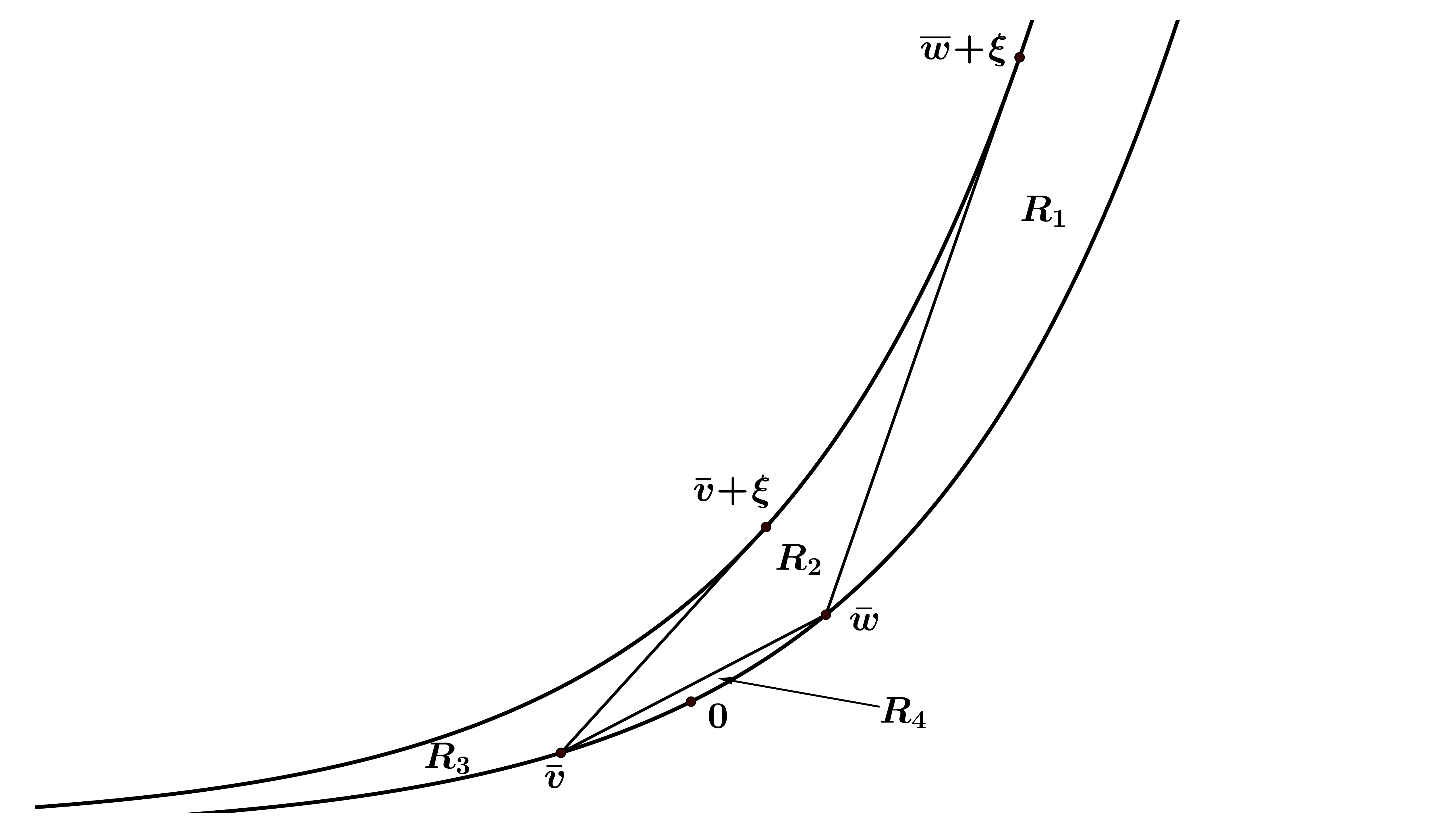}
\caption{The splitting of $\Omega_C$ for sufficiently large $C$}
\label{fig_p>2}
\end{figure}

\cre{\eq[bp>2domain]{
\begin{aligned}
R_1&=\big\{x\in\Omega_C\colon x_2 \le k(\bcr)(x_1-\bcr)+e^{\bcr}\big\} 
\cup\big\{x\in\Omega_C\colon x_1\ge\bcr+\xi\big\};
\\
R_2&=\big\{x\in\Omega_C\colon x_2\le k(\acr)(x_1-\acr)+e^{\acr},\ 
x_2\ge r(\acr,\bcr)(x_1-\bcr)+e^{\bcr},\ x_2\ge k(\bcr)(x_1-\bcr)+e^{\bcr}\big\}
\\
&\cup\big\{x\in\Omega_C\colon \acr+\xi\le x_1\le\bcr+\xi,\ x_2\ge k(\acr)(x_1-\acr)+e^{\acr},\ 
x_2\ge k(\bcr)(x_1-\bcr)+e^{\bcr}\big\};
\\
R_3&=\big\{x\in\Omega_C\colon x_1\le \acr+\xi,\ x_2\ge k(\acr)(x_1-\acr)+e^{\acr}\big\};
\\
R_4&=\big\{x\in\Omega_C\colon x_2\le r(\acr,\bcr)(x_1-\bcr)+e^{\bcr}\}.
\end{aligned}
}}

Our Bellman candidate will have a different expression in each of the four subdomains, requiring several auxiliary objects. First, for $z\in\mathbb{R},$ let
\eq[km1]{
m_1(z)=\frac p\xi\,e^{\cre{z}/\xi}\int_z^\infty s|s|^{p-2} e^{-s/\xi}\,ds,
}
and for $z<\acr,$ let
\eq[m3]{
m_3(z)=-\frac p\xi\,e^{z/\xi}\int_z^{\acr} (-s)^{p-1} e^{-s/\xi}\,ds+e^{(z-\acr)/\xi}\left(e^{\acr-\bcr}\big(m_1(\bcr)-p\bcr^{p-1}\big)-p(-\acr)^{p-1}\right).
}
The following intuitive lemma, whose simple proof is left to the reader, defines two new functions on $R_4.$
\begin{lemma}
\label{lemma_lunka_x}
For each $x=(x_1,x_2)\in R_4$ there exists a unique pair $\{v,w\}$ satisfying~\eqref{F} and such that the line segment connecting the points $(w,e^w)$ and $(v,e^v)$ passes through~$x.$ Thus,
$$
x_2=r(v,w)(x_1-w)+e^w.
$$
\cre{In the special case $x=(0,1)$ this segment degenerates into a point: $v=w=0.$}
\end{lemma}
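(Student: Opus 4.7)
My plan is to parametrize the admissible chord family by its right endpoint $w\in[0,\bcr]$, use Lemma~\ref{lemma_lunka} to recover the left endpoint as a continuous, strictly decreasing function $v=v(w)$ with $v(0)=0$ and $v(\bcr)=\acr$, and then show that the chord through $(v(w),e^{v(w)})$ and $(w,e^w)$ sweeps $R_4$ bijectively as $w$ ranges over $(0,\bcr]$, with the limit case $w=0$ accounting for the single point $(0,1)\in R_4$.

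Since $\bcr<\frac{p-2}{3p}$ by Lemma~\ref{lemma_tr}, Lemma~\ref{lemma_lunka} produces, for each $w\in(0,\bcr]$, a unique $v(w)\in\bigl(-w,-\tfrac{p-1}p w\bigr)$ with $F(v(w),w)=0$, and the remark right after Lemma~\ref{lemma_tr} gives $v(\bcr)=\acr$. The implicit function theorem applies because the inequality $F_v<0$ at these roots was already verified in the proof of Lemma~\ref{lemma_lunka}, so $v\in C^1(0,\bcr]$; continuity at $0$ is forced by $-w<v(w)<0$. For monotonicity $v'(w)<0$, I would differentiate the identity~\eqref{form} and use the strict positivity of $D'$ from Lemma~\ref{lemma_d}: if $v'(w)\ge 0$ then the slope $r(v(w),w)$ would fail to keep pace with the increasing ratio $(q-f')/(r-g')=D(w)$, contradicting $r-g'<0$ combined with $D'>0$.

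With $v$ in hand, introduce the continuous map
$$
\Phi(w,t)=\bigl((1-t)v(w)+tw,\ (1-t)e^{v(w)}+te^w\bigr),\qquad (w,t)\in[0,\bcr]\times[0,1],
$$
whose $w$-section is the chord $L_w$ joining $(v(w),e^{v(w)})$ and $(w,e^w)$. At $w=0$ it degenerates to $(0,1)$; at $w=\bcr$ it is exactly the upper boundary segment of $R_4$ by the definition~\eqref{bp>2domain}; and the two edges $t=0,1$ together parametrize the arc of $\Gamma_1$ from $(\acr,e^{\acr})$ to $(\bcr,e^{\bcr})$, which is the lower boundary of $R_4$. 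Hence $\Phi$ carries the boundary of the parameter rectangle onto $\partial R_4$. The strict convexity of $s\mapsto e^s$, combined with the nested ordering $v(w_2)<v(w_1)<w_1<w_2$ whenever $0<w_1<w_2\le\bcr$, puts the chord $L_{w_2}$ strictly above $L_{w_1}$ throughout $[v(w_1),w_1]$; outside this interval $L_{w_1}$ is not defined. So distinct chords are disjoint inside $R_4$. A short Jacobian computation shows that $\Phi$ is a local diffeomorphism on the open rectangle (the sign of the Jacobian reduces to $v'<0$ together with the convexity inequalities $e^w-e^v>(w-v)e^v$ and $e^w-e^v<(w-v)e^w$). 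Combined with the boundary behaviour and invariance of domain, this makes $\Phi$ a bijection from the rectangle, with $\{0\}\times[0,1]$ identified to a single point, onto $R_4$; equivalently, every $x\in R_4$ lies on exactly one chord $L_w$, as claimed.

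The only step requiring genuine computation is the strict monotonicity $v'(w)<0$, since the existence and uniqueness of $v(w)$ together with $F_v<0$ are already embedded in Lemma~\ref{lemma_lunka} and the sign of $D'$ in Lemma~\ref{lemma_d}; everything else is routine bookkeeping around the strictly convex graph $\Gamma_1$, which is doubtless why the proof was left to the reader.
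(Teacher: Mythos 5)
The paper states this lemma without proof (it is ``left to the reader''), so there is nothing to compare against directly; your strategy---parametrize the cup chords by the right endpoint $w$, establish monotonicity of $v(w)$, and argue that the nested chords sweep $R_4$---is the natural one and is sound in outline. However, the step you single out as ``the only step requiring genuine computation'' is exactly where the argument has a real gap.

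You propose to obtain $v'(w)<0$ by ``differentiating the identity~\eqref{form}.'' This cannot work, because~\eqref{form} imposes no constraint on $v'$ at all. Along a solution curve of $F(v,w)=0$ we have, from~\eqref{D}, both $pw^{p-1}-q=D\,(e^w-r)$ and $p(-v)^{p-1}+q=D\,(r-e^v)$; since $q_w=\frac{pw^{p-1}-q}{w-v}$, $r_w=\frac{e^w-r}{w-v}$, $q_v=\frac{p(-v)^{p-1}+q}{w-v}$, $r_v=\frac{r-e^v}{w-v}$, these two identities say precisely that $q_w-Dr_w=0$ \emph{and} $q_v-Dr_v=0$. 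Hence $q'-Dr'=(q_w-Dr_w)+v'(q_v-Dr_v)=0$ for \emph{every} value of $v'$, i.e.~$q'/r'=D$ is automatic once $F=0$ holds, regardless of the derivative of $v$. This is precisely why the paper's computation of $D'$ in the proof of Lemma~\ref{lemma_d} ends up independent of $v'$. Differentiating $D$ and using $D'>0$ therefore yields a tautology, not a bound on $v'$.

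The correct way to get $v'<0$ is to differentiate the defining relation $F(v(w),w)=0$ itself. From the proof of Lemma~\ref{lemma_lunka} you already know $F_v<0$ at the root; a direct computation of $F_w$, simplified by substituting $F(v,w)=0$ (which lets you replace $w^p-(-v)^p$ by $pw^{p-1}+p(-v)^{p-1}-p(w-v)\frac{w^{p-1}e^v+(-v)^{p-1}e^w}{e^w-e^v}$), yields the factorization
\begin{equation*}
F_w(v,w)=\bigl(e^w-e^v-(w-v)e^v\bigr)\,e^w\left[p\,\frac{w^{p-1}+(-v)^{p-1}}{e^w-e^v}-p(p-1)w^{p-2}e^{-w}\right],
\end{equation*}
entirely analogous to the paper's factorization of $F_v$. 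The first factor is positive by strict convexity of $e^t$, the middle factor is positive, and the bracket is negative by~\eqref{ineq_lunka2} (applicable since $w\le\bcr<\frac{p-2}{3p}<\frac{p-2}{p-1}$). Thus $F_w<0$, and since $F_v<0$ as well, $v'(w)=-F_w/F_v<0$. With this fix in place, the rest of your argument---continuity of $v$ on $[0,\bcr]$, nesting of the chords by convexity, the sign of the Jacobian of $\Phi$ on the open rectangle, and the degree/boundary argument---goes through as written.
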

From here on, we reserve the symbols $w$ and $v$ for the two functions on $R_4$ given by this lemma: $w=w(x)$ and $v=v(x);$ see Figure~\ref{fig_lunka}.
\begin{figure}[!h!]
\includegraphics[width=12cm]{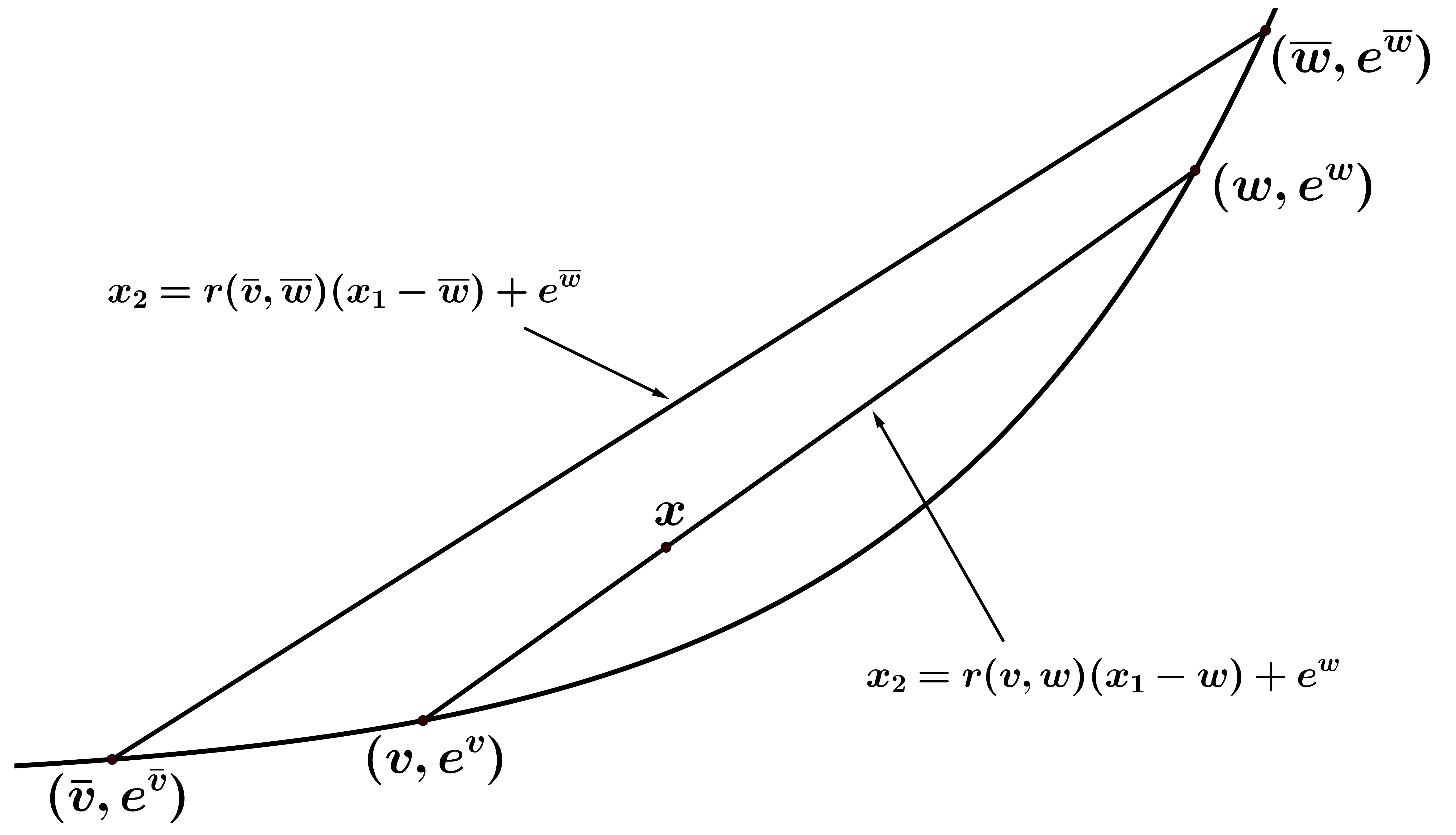}
\caption{The subdomain $R_4$ and the geometric meaning of $v(x)$ and $w(x)$}
\label{fig_lunka}
\end{figure}

Finally, here is our complete Bellman candidate. For $p>2$ and $C>C_0(p),$ let
\eq[b_main]{
b_{p,C}(x)=
\begin{cases}
m_1(u)(x_1-u)+u^p,& x\in R_1,
\bigskip

\\
q(\acr,\bcr)\,(x_1-\bcr)+\bcr^p+\frac{m_1(\bcr)-q(\acr,\bcr)}{k(\bcr)-r(\acr,\bcr)}\big(x_2-r(\acr,\bcr)(x_1-\bcr)-e^{\bcr}\big),&x\in R_2,
\bigskip

\\
m_3(u)(x_1-u)+(-u)^p,& x\in R_3,
\bigskip

\\
q\big(v,w)\,(x_1-w)+w^p,& x\in R_4.
\end{cases}
}
\cre{Recall that here $u=u(x)$ is given by~\eqref{udef}; $v=v(x)$ and $w=w(x)$ have just been defined in Lemma~\ref{lemma_lunka_x}; $k$ is given by~\eqref{km0}; $r$ and $q$ are given by~\eqref{rq}; $m_1$ is given by~\eqref{km1}; and $m_3$ is given by~\eqref{m3}. In addition, $\bcr$ was defined in Lemma~\ref{lemma_tr} as the solution of equation~\eqref{trolleybus}, while $\acr$ was defined in the remark following that lemma as the unique solution of equation~$F(v,\bcr)=0$ with $F$ given by equation~\eqref{F}.}

The next section presents the main theorem relating \cre{the candidate} $b_{p,C}$ and \cre{the Bellman function} $\bel{b}_{p,C}$ from~\eqref{bh}.

\section{The main Bellman theorem \cre{and a proof of the lower estimate}}
\label{induction}
The following result is the principal ingredient in the proofs of Theorems~\ref{t1} and~\ref{t2.5}.
\begin{theorem}
\label{main_bellman}
If $p>2$ and $C>C_0(p),$ then
\eq[mbb]{
\bel{b}_{p,C}= b_{p,C}\quad in\quad \Omega_C.
}
\end{theorem}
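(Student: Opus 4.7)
The theorem asserts equality of $\bel{b}_{p,C}$ (an infimum) with the explicit candidate $b_{p,C}$, so my plan is to prove the two opposite inequalities separately. The authors indicate that the upper bound $\bel{b}_{p,C}\le b_{p,C}$ is obtained in Section~\ref{optimizers} by exhibiting concrete test functions, so the task for Section~\ref{induction} is the lower estimate
\[
b_{p,C}(x)\le\av{|\varphi|^p}Q\quad\text{for every}\ x\in\Omega_C\ \text{and}\ \varphi\in E_{x,C,Q}.
\]
I will establish this by the now-standard Bellman induction on a dyadic splitting of $Q$, which reduces the estimate to three structural properties of $b_{p,C}$ on $\Omega_C$: an obstacle condition on $\Gamma_1$, $C^1$-matching across region interfaces, and local convexity.

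The obstacle condition $b_{p,C}(x_1,e^{x_1})=|x_1|^p$ on $\Gamma_1$ I verify piece-by-piece in the regions that touch $\Gamma_1$, namely $R_1$, $R_3$, $R_4$, using the formulas for $m_1$, $m_3$, and $q$. The $C^1$-matching across the four interfaces is exactly what the defining equations~\eqref{F} and~\eqref{trolleybus} for $\acr$ and $\bcr$ are engineered to yield; Lemmas~\ref{lemma_lunka}--\ref{lemma_tr} already guarantee existence and uniqueness of these parameters for $C>C_0(p)$, and direct computation should show that the corresponding first derivatives of the four pieces agree on their common boundaries. Local convexity --- meaning convexity of $b_{p,C}$ along every line segment lying in $\Omega_C$ --- is the most technical part. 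Inside each $R_j$, the candidate is a ruled surface that is affine along \ma characteristics, so its Hessian has rank at most one and it suffices to check the sign of the transversal second derivative; this is an explicit calculation exploiting the monotonicity $D'>0$ from Lemma~\ref{lemma_d} together with the sharp inequalities~\eqref{ineq_d1} and~\eqref{ineq_d2}. Across region boundaries, $C^1$-matching upgrades interior convexity to global local convexity, provided that the characteristic line fields from adjacent regions align coherently --- a geometric condition I plan to check interface by interface, guided by Figure~\ref{fig_p>2}.

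With these properties in hand, the induction itself is routine. Given $\varphi\in E_{x,C,Q}$, split $Q=Q_-\cup Q_+$ into equal halves and let $x^\pm=(\av{\varphi}{Q_\pm},\av{e^\varphi}{Q_\pm})$. Because the $A_\infty$ condition is hereditary, $x^\pm\in\Omega_C$, and by the classical chord-containment property of the $A_\infty$-domain the entire segment $[x^-,x^+]$ lies in $\Omega_C$. Local convexity then yields $b_{p,C}(x)\le\tfrac12\bigl(b_{p,C}(x^-)+b_{p,C}(x^+)\bigr)$. Iterating to the $n$-th dyadic generation gives $b_{p,C}(x)\le 2^{-n}\sum_j b_{p,C}(x_j^{(n)})$, where $x_j^{(n)}$ is the average point on the $j$-th subinterval. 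At a.e.\ Lebesgue point $t\in Q$ the $x_j^{(n)}$ approach $(\varphi(t),e^{\varphi(t)})\in\Gamma_1$, the obstacle condition forces $b_{p,C}(x_j^{(n)})\to|\varphi(t)|^p$, and a dominated-convergence argument closes the estimate.

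The main obstacle, in my view, is the \emph{global} verification of local convexity across the four-region splitting. Since a single chord $[x^-,x^+]\subset\Omega_C$ can cross several $R_j$'s, convexity inside each region is not sufficient on its own, and the compatibility at each of the four interfaces $R_1$--$R_2$, $R_2$--$R_3$, $R_2$--$R_4$, and $R_3$--$R_4$ must be checked separately. The threshold $C>C_0(p)$ (equivalently $\xi>\xi_0(p)=\xicr$) appears to be exactly what permits this global assembly --- the size estimates $c_1<c_2<\xi$ and $c_2<(p-2)/(3p)$ in Lemma~\ref{lemma_tr} encode the margin needed for the trolleybus--cup transition regime to be well-posed, and below the threshold the construction degenerates. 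Most of the technical effort will accordingly be spent on the interface calculations and on tracking the chain of inequalities from Lemmas~\ref{in_lunka}--\ref{lemma_d} through these gluing checks.
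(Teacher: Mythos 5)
Your overall architecture matches the paper's: the theorem is split into the direct inequality $\bel{b}_{p,C}\ge b_{p,C}$ (proved via a convexity argument in Section~\ref{induction}) and the converse $\bel{b}_{p,C}\le b_{p,C}$ (proved via optimizers in Section~\ref{optimizers}), and you correctly identify that the crux of the direct inequality is establishing local convexity of $b_{p,C}$ --- proved region-by-region using $D'>0$ from Lemma~\ref{lemma_d} together with the inequalities~\eqref{ineq_d1},~\eqref{ineq_d2}, and then verifying that $b_{x_2}$ matches (in fact, coincides) across the four interfaces. That part is sound and is essentially what the paper does in Lemma~\ref{lemma_conv}.

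However, your sketch of the Bellman-induction step contains a genuine gap. You assert that ``by the classical chord-containment property of the $A_\infty$-domain the entire segment $[x^-,x^+]$ lies in $\Omega_C$.'' This is not true as stated. The domain $\Omega_C$ is bounded above by $\Gamma_C\colon x_2=Ce^{x_1}$, a strictly convex curve, so $\Omega_C$ is not convex, and a chord between two interior points can cross $\Gamma_C$; the hereditary $A_\infty$ condition only controls the points $(\av{\varphi}{J},\av{e^\varphi}{J})$ for genuine subintervals $J$, not their arbitrary convex combinations. (Indeed, even taking $x^-\in\Gamma_1$ and $x^+\in\Gamma_C$ one can find chords that exit $\Omega_C$.) Without chord containment, the step $b_{p,C}(x)\le\tfrac12\big(b_{p,C}(x^-)+b_{p,C}(x^+)\big)$ does not follow from local convexity alone, and the equal-split dyadic induction collapses. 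The paper sidesteps precisely this difficulty by not re-proving the induction but instead invoking Lemma~\ref{lemma_old} from~\cite{p<2}, whose hypothesis (3) --- the regularization $\lim_{c\searrow C}b_{p,c}(x)=b_{p,C}(x)$ --- is there exactly to handle the non-convexity of $\Omega_C$. The closing ``dominated-convergence argument'' is also glossed over; one needs either a domination estimate for $b_{p,C}$ at the dyadic average points (not obviously available without further work) or an approximation by step functions, and this again is packaged inside Lemma~\ref{lemma_old}. So the route you sketch is conceptually right but incomplete at precisely the delicate step, whereas the paper delegates that step to a previously established general lemma and focuses its new effort on verifying its hypotheses, local convexity in particular.
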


\cre{
As is common, we split the proof of Theorem~\ref{main_bellman} in two parts: the so-called direct inequality $\bel{b}_{p,C}\ge b_{p,C}$ and its converse.

\begin{lemma}
\label{direct}
If $p>2$ and $C>C_0(p),$ then
\eq[mbb1]{
\bel{b}_{p,C}\ge b_{p,C}\quad in\quad \Omega_C.
}
\end{lemma}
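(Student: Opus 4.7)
The strategy is the classical Bellman-function argument: I would show that $b_{p,C}$ is a continuous, locally convex minorant of $|x_1|^p$ on $\Omega_C$ agreeing with $|x_1|^p$ on the lower boundary $\Gamma_1$, and then run the standard dyadic induction on test functions $\varphi \in E_{x,C,Q}$. Here ``locally convex'' means: whenever $x,x_-,x_+ \in \Omega_C$, $x=\tfrac12(x_-+x_+)$, and $[x_-,x_+]\subset\Omega_C$,
\[
b_{p,C}(x) \le \tfrac12\bigl(b_{p,C}(x_-)+b_{p,C}(x_+)\bigr).
\]

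The first step is to verify continuity and the boundary condition $b_{p,C}(x_1,e^{x_1})=|x_1|^p$ on $\Gamma_1$. For points in $\Gamma_1 \cap R_1$ one has $u=x_1\ge 0$ and the first line of~\eqref{b_main} gives $|x_1|^p$ directly; symmetrically in $R_3$. For points in $\Gamma_1 \cap R_4$, Lemma~\ref{lemma_lunka_x} yields the degenerate chord $v=w=x_1$, and the last formula collapses to $w^p=|x_1|^p$. The four pieces must also match continuously, and in fact $C^1$, across three seams: the two tangent lines separating $R_1$ from $R_2$ and $R_3$ from $R_2$, and the chord separating $R_2$ from $R_4$. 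Gradient-matching on these seams is precisely the content of equations~\eqref{F} and~\eqref{eq_lunka}: these identities express geometrically the fact that the tangent to the cup at $(\bcr,\bcr^p)$ and the chord from $(\acr,(-\acr)^p)$ meet the affine piece in $R_2$ with the correct slopes.

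Next I would verify local convexity region by region. By construction each $R_j$ is foliated by straight-line characteristics along which $b_{p,C}$ is affine and the homogeneous Monge--Amp\`ere equation holds, so convexity reduces to the sign of a single transverse second derivative. On the tangentially foliated regions $R_1$ and $R_3$, this sign is controlled by the monotonicity of $m_1$ and $m_3$, which in turn follows from elementary estimates on the incomplete Gamma integrals in~\eqref{km1}--\eqref{m3} together with the assumption $\xi>\xi_0(p)$. On $R_2$ the function is affine, so convexity is automatic. On the cup region $R_4$, transverse convexity along the chord foliation reduces to the positivity of $D'$, which is exactly the last assertion of Lemma~\ref{lemma_d}. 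The compatibility between $R_4$ and the affine $R_2$ along their shared seam is encoded in identity~\eqref{form}.

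With local convexity and the boundary condition in hand, the conclusion is routine. Given $\varphi \in E_{x,C,Q}$, split $Q$ into two equal halves $Q_\pm$ and set $x_\pm=(\av\varphi{Q_\pm},\av{e^\varphi}{Q_\pm})$. Since $[e^\varphi]_{A_\infty(Q_\pm)}\le[e^\varphi]_{A_\infty(Q)}\le C$, both $x_\pm$ lie in $\Omega_C$, and the standard geometric property of the $A_\infty$ domain (used throughout~\cite{sv,vv,p<2}) guarantees that $[x_-,x_+]\subset \Omega_C$ as well. Local convexity then gives $b_{p,C}(x)\le \tfrac12(b_{p,C}(x_-)+b_{p,C}(x_+))$. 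Iterating over $n$ generations of dyadic subintervals and passing to the limit, Lebesgue differentiation sends the averages $x_I$ to $(\varphi(t),e^{\varphi(t)})\in\Gamma_1$ for a.e.\ $t\in Q$; combined with an a priori growth bound for $b_{p,C}$ and dominated convergence, this yields $b_{p,C}(x)\le \av{|\varphi|^p}Q$. Taking the infimum over $\varphi$ produces~\eqref{mbb1}.

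The main obstacle is the region-by-region verification of transverse convexity together with $C^1$-matching across the seams, especially at the two interfaces involving the cup $R_4$ and the affine piece $R_2$. This is exactly where the bookkeeping of Lemmas~\ref{in_lunka}--\ref{lemma_tr} pays off and where the strict inequality $\xi>\xi_0(p)$ (equivalently $C>C_0(p)$) is essential.
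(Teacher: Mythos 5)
Your proposal follows the same overall architecture as the paper: reduce the inequality $\bel{b}_{p,C}\ge b_{p,C}$ to checking that $b_{p,C}$ is a continuous, locally convex function satisfying the boundary condition $b_{p,C}(x_1,e^{x_1})=|x_1|^p$, and then feed this into a Bellman-type induction over test functions. Your region-by-region analysis largely matches the paper's Lemma~\ref{lemma_conv}: the reduction of convexity in $R_1$, $R_3$ to a sign condition along the tangential foliation (the paper checks $m'-m''>0$ via~\eqref{ma}, not the monotonicity of $m$ itself, but this is a minor misstatement), $R_2$ being affine, $D'>0$ from Lemma~\ref{lemma_d} as the transverse convexity on $R_4$, and equations~\eqref{F},~\eqref{eq_lunka},~\eqref{form} as the mechanism for the $C^1$ seam-matching. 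That part is indeed where the hypotheses $p>2$ and $C>C_0(p)$ enter, and it is the main content of the paper's proof.

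The genuine gap is in the closing paragraph, in what you call the ``standard dyadic induction.'' You assert that ``the standard geometric property of the $A_\infty$ domain \ldots\ guarantees that $[x_-,x_+]\subset\Omega_C$.'' This is false in general, and there is no such standard property: the upper boundary $\Gamma_C$ is a convex curve bounding the domain from above, so $\Omega_C$ is not convex, and the chord joining $x_-$ and $x_+$ can escape through $\Gamma_C$ even though $x_-$, $x_+$, and the midpoint $x$ all lie in $\Omega_C$. (Abstractly, along the segment $s(t)=tx_-+(1-t)x_+$ the function $t\mapsto Ce^{s_1(t)}-s_2(t)$ is convex; its being nonnegative at $t=0,\tfrac12,1$ does not force nonnegativity on all of $[0,1]$.) This is precisely why the paper does not rerun the induction from scratch but instead invokes Lemma~\ref{lemma_old}, imported from~\cite{p<2}, which carries a fourth hypothesis --- condition~(3), the continuity $\lim_{c\searrow C}b_{p,c}(x)=b_{p,C}(x)$ --- that your proposal never mentions. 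The point of that extra condition is exactly to absorb segments that wander above $\Gamma_C$: one iterates inside the slightly enlarged domains $\Omega_{c}$, $c>C$, and then uses condition~(3) to send $c\searrow C$ (alternatively one must invoke a genuine splitting lemma choosing the cut point adaptively). Without one of these devices, the inductive step as written does not close. The rest of the proposal --- continuity, boundary behavior, region-by-region convexity, and seam-matching --- is on target and corresponds to the part of the argument the paper actually carries out in detail.
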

\begin{lemma}
\label{converse}
If $p>2$ and $C>C_0(p),$ then
\eq[mbb2]{
\bel{b}_{p,C}\le b_{p,C}\quad in\quad \Omega_C.
}
\end{lemma}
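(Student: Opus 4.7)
I would prove the converse inequality by exhibiting, for each $x\in\Omega_C$ and each $\eta>0$, a test function $\varphi\in E_{x,C,Q}$ with $\av{|\varphi|^p}Q\le b_{p,C}(x)+\eta$; scale and translation invariance of all relevant quantities lets me fix $Q=(0,1)$. The guiding principle is that $b_{p,C}$ is affine along the \ma characteristics described in Section~\ref{new_bellman}: chords between two points of $\Gamma_1$ in $R_4$, tangents to $\Gamma_C$ in $R_1$ and $R_3$, and every direction in $R_2$ (where $b_{p,C}$ is globally a plane). In each region I build $\varphi$ by subdividing $Q$ into intervals on which $\varphi$ is either constant (realizing an endpoint of a characteristic on $\Gamma_1$) or a ``$\Gamma_C$-extremizer'' step function (realizing the tangency point on $\Gamma_C$), with relative lengths chosen so that $\av{\varphi}Q=x_1$ and $\av{e^{\varphi}}Q=x_2$.

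\textbf{Easy cases.} On $\Gamma_1$, the identity $\av{e^{\varphi}}Q=e^{\av{\varphi}Q}$ combined with strict convexity of the exponential forces $\varphi\equiv x_1$, so the unique test function yields $|x_1|^p=b_{p,C}(x)$ (which matches $b_{p,C}$ on $\Gamma_1$ by direct evaluation of the four pieces). On $R_4$, Lemma~\ref{lemma_lunka_x} provides the chord through $x$ with endpoints $(v(x),e^{v(x)})$ and $(w(x),e^{w(x)})$ on $\Gamma_1$; writing $x=\alpha(v,e^v)+(1-\alpha)(w,e^w)$ and putting $\varphi=v$ on a subinterval of length $\alpha$ and $\varphi=w$ on its complement, a direct computation gives $\av{|\varphi|^p}Q=\alpha(-v)^p+(1-\alpha)w^p=q(v,w)(x_1-w)+w^p=b_{p,C}(x)$. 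Since this unimodal step function attains its $A_\infty$-characteristic over $Q$ itself and equals $x_2e^{-x_1}\le C$, the constraint is verified.

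\textbf{Extremizers on $\Gamma_C$.} The central technical step constructs, for each admissible $z$, a sequence of step functions $\varphi_N$ on $(0,1)$ whose values form a partition of the interval $[z,z+\xi]$ (increasing for points in $\Gamma_C\cap R_1$, decreasing for $\Gamma_C\cap R_3$) on subintervals whose lengths are chosen by an iterative geometric rule keeping $\av{e^{\varphi_N}}J\big/e^{\av{\varphi_N}J}=C$ on every initial piece $J$ of the subdivision. One verifies
\begin{equation*}
\av{\varphi_N}Q\to z+\xi,\qquad \av{e^{\varphi_N}}Q=Ce^{z+\xi},\qquad [e^{\varphi_N}]_{A_\infty(Q)}=C,
\end{equation*}
while the Riemann sum for $\av{|\varphi_N|^p}Q$ converges to the integral defining $m_1(z)\xi+z^p$ through~\eqref{km1} (respectively to the $m_3$-analogue from~\eqref{m3}). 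The formulas~\eqref{km1} and~\eqref{m3} were engineered precisely so that these limits coincide with the boundary values of the candidate on $\Gamma_C\cap R_1$ and $\Gamma_C\cap R_3$.

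\textbf{Interior of $R_1,R_3,R_2$ and main obstacle.} For $x\in\mathrm{int}(R_1)$, the characteristic through $x$ meets $\Gamma_1$ at $(u,e^u)$ and $\Gamma_C$ at $(u+\xi,Ce^{u+\xi})$, where $u=u(x)$; writing $x=\alpha(u,e^u)+(1-\alpha)(u+\xi,Ce^{u+\xi})$, I split $Q$ accordingly, place a constant $\varphi=u$ on the first piece and the $\Gamma_C$-extremizer on the second. The affinity of $b_{p,C}$ along the characteristic gives $\limsup\av{|\varphi|^p}Q=\alpha u^p+(1-\alpha)(m_1(u)\xi+u^p)=b_{p,C}(x)$, and $R_3$ is handled identically with $m_3$. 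For $x\in R_2$, where $b_{p,C}$ is globally affine, I express $x$ as a convex combination of boundary optimizers, namely the constants $\acr,\bcr$ on $\Gamma_1$ together with a $\Gamma_C$-extremizer on the arc between the two tangency points, with weights determined by the two averaging constraints and the saturation $[e^{\varphi}]_{A_\infty(Q)}=C$; global affinity then makes the resulting value match $b_{p,C}(x)$. The principal obstacle throughout is verifying the $A_\infty$-bound on \emph{every} subinterval $J\subset Q$, not only on $Q$ itself: for $J$ lying entirely inside one piece the bound is inherited from that piece's construction, but for $J$ straddling a split point it reduces to a family of monotonicity inequalities among the parameters $u,v,w,\xi,\acr,\bcr$. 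These follow from Lemmas~\ref{in_lunka}--\ref{lemma_d} together with the defining equations~\eqref{eq_lunka}, \eqref{tr0}, and~\eqref{trolleybus} of $\acr$ and $\bcr$, and they are the main technical ingredient of the argument.
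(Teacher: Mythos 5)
Your overall strategy --- exhibiting explicit test functions realizing the infimum in~\eqref{bh}, region by region, guided by the \ma characteristics --- is precisely what Section~\ref{optimizers} does, and your treatment of $\Gamma_1$, $R_4$, and the concatenations for $R_1$, $R_2$, $R_3$ corresponds to formulas~\eqref{opt_r1}--\eqref{opt_r4b}. However, your description of the central ``$\Gamma_C$-extremizer'' is wrong. The optimizer for a point $(z,Ce^z)\in\Gamma_C$ is the unbounded continuous function $\varphi(t)=(z-\xi)+\xi\log(1/t)$ on $(0,1)$ (this is~\eqref{opt_r1} with $\alpha=1$), whose range is $(z-\xi,\infty)$. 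Your step functions $\varphi_N$, with values confined to the bounded interval $[z,z+\xi]$, cannot do this: a step function whose values lie in an interval of length $\xi<1$ has $A_\infty$-characteristic bounded by a small absolute constant (roughly $1+O(\xi^2)$), whereas by~\eqref{xicr}, $C>C_0(p)\approx 3^{p+2}\Gamma(p)/e$ is very large, so the equality $[e^{\varphi_N}]_{A_\infty(Q)}=C$ that you assert is unattainable for such $\varphi_N$. A step-function discretization of the logarithm can be made to work, but only with an \emph{unbounded} sequence of values; the paper avoids the issue entirely by writing down the exact continuous extremizer.

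Two further points. In $R_4$ you claim the two-valued step function ``attains its $A_\infty$-characteristic over $Q$ itself'' and that this equals $x_2e^{-x_1}$. That is not right in general: the supremum of $\av{e^{\varphi}}{J}e^{-\av{\varphi}{J}}$ over straddling subintervals $J$ depends only on the relative split proportion of $J$, and is maximized at a proportion that typically differs from the one corresponding to $J=Q$. What actually saves the argument is that $w-v<2(p-2)/(3p)$ is small, making this supremum uniformly bounded well below $C_0(p)<C.$ Finally, the ``main obstacle'' you flag --- verifying the $A_\infty$-bound across split points for the concatenated optimizers --- is indeed where the real work lies; your outline asserts, but does not demonstrate, that it follows from Lemmas~\ref{in_lunka}--\ref{lemma_d} (which were designed to control the candidate, not the test functions). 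The paper outsources that verification to the analogous Lemma~5.2 of~\cite{p<2}.
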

The proofs of Theorems~~\ref{t1} and~\ref{t2.5} use only Lemma~\ref{direct}, which we prove in this section. For the sake of completeness, we will show that the infimum in the definition of the Bellman function is attained at every point in $\Omega_C,$ and our candidate is in fact the Bellman function. This is done in Section~\ref{optimizers} where we prove Lemma~\ref{converse} .}

\cre{The analog of Lemma~\ref{direct}} for $p\in[1,2]$ was proved in Section~5 of~\cite{p<2}. In fact, the proof given there did not depend on the specific range of $p$ used. Rather, its main ingredient was showing that $b_{p,C}$ is locally convex in $\Omega_C,$ i.e., convex along every line segment contained in~$\Omega_C.$ More precisely, the main result of Lemma~5.1 from~\cite{p<2} can be restated as follows.
\begin{lemma}[\cite{p<2}]
\label{lemma_old}
Fix $p>0$ and assume that for some $C(p)\ge1$ there is a family of functions 
$\{b_{p,C}\}_{C\ge C(p)}$ satisfying the following conditions for each $C:$
\ben
\item[{\rm (1)}]
$b_{p,C}$ is locally convex in~$\Omega_C;$ 
\item[{\rm (2)}]
$b_{p,C}$ is continuous in~$\Omega_C;$
\item[{\rm (3)}]
For each $x\in\Omega_C,$ 
$$
\lim_{\cre{c\searrow C}}b_{p,c}(x)=b_{p,C}(x);
$$
\item[{\rm (4)}]
For each $s\in\mathbb{R},$ $b_{p,C}(s,e^s)=|s|^p.$
\een
Then for all $C\ge C(p),$
$$
\bel{b}_{p,C}\ge b_{p,C}\quad\text{in}\quad\Omega_C.
$$
\end{lemma}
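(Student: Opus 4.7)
The plan is to prove that $\langle|\varphi|^p\rangle_Q \ge b_{p,C}(x)$ for every $\varphi \in E_{x,C,Q}$ by a standard Bellman/martingale iteration along the dyadic filtration of $Q$; taking the infimum over $\varphi$ in the definition~\eqref{bh} then yields $\bel{b}_{p,C}(x) \ge b_{p,C}(x)$. Fix such a $\varphi$. For each $n \ge 0$, let $\mathcal{D}_n$ denote the partition of $Q$ into $2^n$ equal dyadic subintervals, and for $J \in \mathcal{D}_n$ set $x_J = (\av{\varphi}{J}, \av{e^\varphi}{J})$. Because $[e^\varphi]_{A_\infty(J)} \le [e^\varphi]_{A_\infty(Q)} \le C$, every $x_J$ lies in $\Omega_C$, and for each parent-child pair $J = J^- \cup J^+$ one has $x_J = \tfrac12(x_{J^-} + x_{J^+})$. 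By Lebesgue's differentiation theorem, $x_{J_n(t)} \to (\varphi(t), e^{\varphi(t)}) \in \Gamma_1$ for almost every $t \in Q$, where $J_n(t) \in \mathcal{D}_n$ is the dyadic interval containing $t$.

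The heart of the argument is the split inequality
\[
b_{p,C}(x_J) \le \tfrac12\bigl[b_{p,C}(x_{J^-}) + b_{p,C}(x_{J^+})\bigr],
\]
which, iterated from $J = Q$ down to level $n$ and averaged by lengths, yields $b_{p,C}(x) \le |Q|^{-1}\sum_{J \in \mathcal{D}_n}|J|\,b_{p,C}(x_J)$. Letting $n \to \infty$ and combining condition~(2) (continuity in $\Omega_C$) with condition~(4) (boundary data on $\Gamma_1$), one has $b_{p,C}(x_J) \to |\varphi(t)|^p$ pointwise a.e., and a dominated-convergence step --- after first truncating $\varphi$ and using a crude polynomial upper bound on $b_{p,C}$ on bounded subsets of $\Omega_C$ coming from local convexity --- pushes this into the average, so that the right-hand side tends to $\av{|\varphi|^p}{Q}$. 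This closes the estimate and gives the desired inequality.

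The main obstacle --- and the sole reason the family $\{b_{p,c}\}_{c \ge C(p)}$ and condition~(3) are needed, rather than a single function --- is that the chord $[x_{J^-}, x_{J^+}]$ used in the split inequality need not stay inside $\Omega_C$. Indeed, $\Omega_C$ is non-convex: its upper boundary $\Gamma_C$ is the graph of the convex function $x_1 \mapsto Ce^{x_1}$, so the region below it bulges inward, and the straight segment joining two points of $\Omega_C$ through an interior midpoint may exit through $\Gamma_C$. When this happens, condition~(1) applied with parameter $C$ does not apply directly. The remedy, carried out in Section~5 of \cite{p<2} for $1 \le p \le 2$ by an argument that does not depend on the specific range of $p$, is to choose, for each offending $J$, the least $c^\ast_J \ge C$ for which $[x_{J^-}, x_{J^+}] \subset \Omega_{c^\ast_J}$, apply local convexity of $b_{p, c^\ast_J}$ in the enlarged domain, and at the end use condition~(3) to collapse the parameter by letting $c \searrow C$. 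Reconciling these enlarged parameters uniformly across the dyadic tree is the technical crux: in practice one estimates a common upper bound $c^\ast$ after a preliminary truncation of $\varphi$, runs the entire iteration at parameter $c^\ast$, and invokes condition~(3) only once --- at the end --- to send $c^\ast \searrow C$ and recover the bound for $b_{p,C}$.
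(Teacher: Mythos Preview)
The paper does not prove this lemma; it is quoted from~\cite{p<2} and invoked as a black box. Your sketch captures the standard Bellman-induction ingredients correctly --- local convexity for the split inequality, the boundary condition~(4) for the pointwise limit along the martingale, and condition~(3) to cope with the non-convexity of $\Omega_C$ --- and that is indeed the approach of~\cite{p<2}.

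There is, however, a genuine gap in the mechanism you describe for invoking condition~(3). You write that one truncates $\varphi$, then ``estimates a common upper bound $c^\ast$'' for which all dyadic chords $[x_{J^-},x_{J^+}]$ lie in $\Omega_{c^\ast}$, runs the iteration at parameter $c^\ast$, and finally sends $c^\ast\searrow C$. But if $|\varphi|\le M$, the dyadic chords can still have first coordinates spread over $[-M,M]$, and the resulting $c^\ast$ is of order $e^{2M}$: it is \emph{determined} by the truncation level and cannot be sent to $C$ while the truncation is held fixed. Since the candidates $b_{p,c}$ are (as one expects, matching $\bel b_{p,c}$) \emph{decreasing} in $c$, the bound you obtain at $c^\ast=e^{2M}$ is strictly weaker than the one you want at $C$, and condition~(3) does not bridge that gap.

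The correct order of quantifiers, which is what the argument in~\cite{p<2} actually uses, is the reverse: fix an arbitrary $c>C$ \emph{first}. Because $\Omega_C\subset\Omega_c$ and the upper boundary $\Gamma_c$ is a fixed multiple above $\Gamma_C$, there exists $\delta=\delta(c/C)>0$ such that every segment with both endpoints in $\Omega_C$ and first-coordinate span less than $\delta$ stays in $\Omega_c$ (concretely, $C\cosh(\delta/2)\le c$ suffices). One then replaces the dyadic splitting by a finer, non-dyadic one --- e.g., for a bounded $\varphi$ choose successive split points so that the averages move by less than $\delta$ in the first coordinate, which is possible by continuity of $s\mapsto\av\varphi{(a,s)}$ --- so that \emph{every} chord in the iteration lies in $\Omega_c$. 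This yields $\av{|\varphi|^p}Q\ge b_{p,c}(x)$ for every $c>C$, and \emph{now} condition~(3) legitimately gives $\av{|\varphi|^p}Q\ge b_{p,C}(x)$. The reduction from general $\varphi$ to bounded $\varphi$ is a separate, routine approximation step that does not interact with the choice of $c$.
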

\noindent It is \cre{routine} to check that conditions (2)-(4) are satisfied for $b_{p,C}$ from~\eqref{b_main}. Therefore, \cre{Lemma~\ref{direct}} will be proved once we have established the following result.
\begin{lemma}
\label{lemma_conv}
For $p>2$ and $C>C_0(p),$ the function $b_{p,C}$ is locally convex in~$\Omega_C.$
\end{lemma}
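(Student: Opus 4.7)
The plan is to verify local convexity by the standard three-step program for Bellman candidates built from Monge--Ampère characteristic foliations on non-convex domains: first, check that each piece of $b_{p,C}$ is convex on the interior of its subdomain; second, check that the four pieces glue together in a $C^1$ manner across the three interfaces; third, combine these two facts to deduce convexity of $b_{p,C}$ restricted to an arbitrary line segment in $\Omega_C$. Since $R_2$ carries an affine expression, its contribution is trivial; the substantive pointwise work lies in $R_1$, $R_3$ (tangential foliations of $\Gamma_C$) and in $R_4$ (the cup, foliated by chords between two points of $\Gamma_1$).

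For the $C^1$ gluing, the three interfaces are the tangent to $\Gamma_C$ at $(\bcr+\xi,Ce^{\bcr+\xi})$, the tangent at $(\acr+\xi,Ce^{\acr+\xi})$, and the chord joining $(\acr,e^\acr)$ and $(\bcr,e^\bcr)$. On each of these segments $b_{p,C}$ is affine when approached from either side, so gluing reduces to matching the two linear coefficients. These matching equations are exactly what the definitions of $\acr$ and $\bcr$ enforce: the identity $F(\acr,\bcr)=0$ from Lemma \ref{lemma_lunka} (equivalently the two equalities in \eqref{eq_lunka}) guarantees that the common value $D(\bcr)$ equals the slope one gets either from the $R_2$ side or from the $R_4$ side along the chord and from the tangential sides in $R_1,R_3;$ and equation \eqref{trolleybus} that defines $\bcr$ is precisely the compatibility condition linking the tangential coefficient $m_1(\bcr)$ in $R_1$ (and $m_3(\acr)$ in $R_3$, by an analogous identity obtained from the definition of $m_3$ in \eqref{m3}) to the affine slope in $R_2$ across the tangent interface.

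For pointwise convexity: on $R_1$ and $R_3$ the characteristics are tangents to $\Gamma_C$ parametrized by $u$, so $\det\text{Hess}(b_{p,C})=0$ and convexity in the transverse direction reduces to checking that the slope function along the parameter is monotone in the correct direction; this is immediate for $m_1$ from its integral form in \eqref{km1} and is established for $m_3$ by the same computation together with the boundary value \eqref{m3} that was selected exactly to make the transverse derivative correct. On $R_4$, both endpoints of the chord through $x$ are functions of $x$ via the implicit relations $F(v,w)=0$ and ``the chord passes through $x$''; here the key simplification is the identity \eqref{form}, $D=q'/r'=(q-f')/(r-g')$, which collapses the Hessian computation to an expression whose sign is controlled by $D'(w)$. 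This is exactly the content of the last assertion in Lemma \ref{lemma_d}. The main obstacle I expect is this transverse-convexity computation on $R_4$: because $v$ and $w$ are both functions of $x$ and are coupled nonlinearly, one must differentiate $q(v(x),w(x))(x_1-w(x))+w(x)^p$ with care, using the implicit relation to express $v_w$ in terms of $F_v,F_w$ and then invoking \eqref{form} to cancel the bulk terms. Once pointwise convexity and $C^1$ gluing are in hand, convexity along an arbitrary segment in $\Omega_C$ is obtained by a direct one-dimensional computation: the second derivative is a sum of nonnegative contributions from each piece traversed, plus a sum of jumps in the first derivative at crossings of interfaces, and each such jump vanishes by the $C^1$ matching established above.
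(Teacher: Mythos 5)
Your overall three-step plan — convexity on each piece, $C^1$ matching of the normal derivative $b_{x_2}$ across interfaces, then convexity along an arbitrary segment by summing contributions — is precisely the structure of the paper's proof, and the discussion of $R_2$ (affine) and $R_4$ (using \eqref{form} to collapse the Hessian and invoking $D'>0$ from Lemma~\ref{lemma_d}) is on target.

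However, there is a genuine gap in your treatment of $R_1$. You claim that the transverse-convexity condition is ``immediate for $m_1$ from its integral form in~\eqref{km1}.'' This is false, and in fact is precisely where the $p>2$ case differs from $1<p\le 2$. The relevant quantity is $m_1'(u)-m_1''(u)$, whose sign is governed by
\[
H_1(u)=\xi u^{p-2}e^{-u/\xi}-(1-\xi)\int_u^\infty s^{p-2}e^{-s/\xi}\,ds.
\]
Since $p>2$, $H_1(0)=-(1-\xi)\xi^{p-1}\Gamma(p-1)<0$, so $m_1'-m_1''$ is \emph{negative} near $u=0$; were this positivity ``immediate,'' the tangential foliation would cover all of $\Omega_C$ (as it does for $1<p\le 2$) and the entire transition regime $R_2\cup R_4$ would be unnecessary. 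The correct argument uses the sign pattern of $H_1'(u)=\xi u^{p-3}e^{-u/\xi}(p-2-u)$ (so $H_1$ increases on $(0,p-2)$, decreases on $(p-2,\infty)$, tends to $0$ at $\infty$), and then, crucially, that $H_1(\bcr)>0$ — which is established by combining the defining equation~\eqref{trolleybus} for $\bcr$ with inequality~\eqref{ineq_d2}. Only because $u\ge\bcr$ throughout $R_1$ (and analogously $u\le\acr$ throughout $R_3$, via~\eqref{ineq_d1}) does one get $m'-m''>0$ where needed. Your proposal correctly sensed that the choice of the transition points enters the argument (you say as much for $m_3$), but you must make this the centerpiece of the $R_1$ computation as well, not treat it as automatic.
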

Let us fix $p>2$ and $C>C_0(p)$ and through the end of this section write simply $b$ for $b_{p,C}.$ 
Before proving Lemma~\ref{lemma_conv}, let us collect several useful facts from earlier work. First, as explained in~\cite{sv1} and~\cite{p<2}, showing that $b$ is locally convex in $\Omega_C$ is the same as showing that it is locally convex in each subdomain $R_k$ and that $b_{x_2}$ is increasing in $x_2$ across shared boundaries between subdomains. Second, in $R_1$ and $R_3,$ $b$ has the form
$$
b(x)=m(u)(x_1-u)+|u|^p, 
$$
where $m$ stands for $m_1$ or $m_3,$ respectively, and in each case satisfies the differential equation 
\eq[de]{
m'(u)=\frac1\xi(m(u)-pu|u|^{p-2}),
}
\cre{and $u=u(x)$ is given by~\eqref{udef}.} 
As shown in~\cite{p<2}, in such a case we have
\eq[bx2]{
b_{x_2}=m'(u)e^{-u}(1-\xi)
}
and also
\eq[ma]{
b_{x_1x_1}b_{x_2x_2}=b_{x_1x_2}^2,\qquad
\sign b_{x_2x_2}=\sign\big(m'(u)-m''(u)\big).
}
Therefore, to show that $b$ is locally convex in $R_1$ and $R_3$ we simply need to show that $m_1'(u)-m_1''(u)>0$ in $R_1$ and $m_3'(u)-m_3''(u)>0$ in $R_3.$

\begin{proof}[Proof of Lemma~\ref{lemma_conv}]
We first show local convexity of $b$ in each subdomain $R_k.$

In $R_1,$ a direct computation gives
$$
\frac{\xi^2}{p(p-1)}(m_1'(u)-m_1''(u))e^{-u/\xi}=\xi u^{p-2}e^{-u/\xi}-(1-\xi)\int_u^\infty e^{-s/\xi}s^{p-2}\,ds=:H_1(u),
$$
where $u\ge\bcr.$ We have 
$$
H'_1(u)=\xi u^{p-3}e^{-u/\xi}(p-2-u).
$$
Therefore, $H_1$ \cre{is increasing for $u\in(0,p-2)$ and decreasing for $u>p-2.$}  Since $H_1(u)\to0$ as $u\to\infty,$
to show that $H_1(u)>0$ for $u\ge\bcr,$ it suffices to show that $H_1(\bcr)>0.$ This immediately follows by applying first~\eqref{trolleybus} and then~\eqref{ineq_d2} with $w=\bcr:$
$$
(1-\xi)\int_{\bcr}^\infty e^{-s/\xi}s^{p-2}\,ds=\frac{\xi e^{\bcr(1-1/\xi)}}{p(p-1)}\,D(\bcr)< \xi \bcr^{p-2} e^{-\bcr/\xi}.
$$
Therefore, $b_{x_2x_2}>0$ in $R_1$ and so $b$ is locally convex in this subdomain.

In $R_2,$ $b$ is affine and thus locally convex.

In $R_3,$ we compute
\begin{align*}
&\frac{\xi^2}{p(p-1)}(m_3'(u)-m_3''(u))e^{-u/\xi}\\
&\qquad=\xi (-u)^{p-2}e^{-u/\xi}-(1-\xi)\Big(\int_u^{\acr} e^{-s/\xi}(-s)^{p-2}\,ds
+e^{(\bcr-\acr)(1/\xi-1)}\int_{\bcr}^\infty e^{-s/\xi}s^{p-2}\,ds\Big)\\
&\qquad=:H_3(u),
\end{align*}
where $u\le \acr.$ We have
$$
H'_3(u)=\xi (-u)^{p-3}e^{-u/\xi}(u-p+2)<0,
$$
and so to show that $H_3(u)\cre{>}0,$ it is enough to show that $H_3(\acr)\cre{>}0.$ Similarly to the case of $H_1,$ this follows from an application of \eqref{trolleybus} and then of~\eqref{ineq_d1} with $v=\acr:$
$$
(1-\xi)e^{(\bcr-\acr)(1/\xi-1)}\int_{\bcr}^\infty e^{-s/\xi}s^{p-2}\,ds=\frac{\xi e^{-\acr(1/\xi-1)}}{p(p-1)}\,D(\bcr)\cre{<} \xi (-\acr)^{p-2} e^{-\acr/\xi}.
$$
Thus, $b$ is locally convex in $R_3.$

Let us state the result for $R_4$ separately.
\begin{lemma}
\label{r4}
$b$ is convex in $R_4.$
\end{lemma}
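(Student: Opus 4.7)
The plan is to show that the Hessian of $b$ is positive semidefinite throughout the interior of $R_4.$ Since the formula for $b$ in $R_4$ from~\eqref{b_main} is affine in $x_1$ for each fixed pair $(v,w),$ the candidate is affine along every chord characteristic, so the chord direction lies in the kernel of the Hessian. Hence $b$ satisfies the homogeneous Monge--Amp\`ere equation $b_{x_1x_1}b_{x_2x_2}=b_{x_1x_2}^2$ in the interior of $R_4,$ and local convexity reduces to the single inequality $b_{x_2x_2}\ge 0.$

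The first main step is to identify $b_{x_2}$ explicitly. Writing $b(x)=q(v,w)(x_1-w)+w^p$ with $v=v(x),$ $w=w(x),$ differentiating, and using $v=v(w)$ along the foliation, one obtains a formula of the form $b_{x_2}=w_{x_2}\cdot H(x_1,v,w).$ The identities~\eqref{D} together with $q'/r'=D$ from~\eqref{form} convert $H$ to $D(w)\cdot\bigl[(x_1-w)r'(w)+(e^w-r)\bigr],$ and a direct manipulation of the chord parametrization $x_1=(1-t)w+tv(w),$ $x_2=(1-t)e^w+te^{v(w)}$ shows the bracket equals $\partial x_2/\partial w|_{x_1}=1/w_{x_2}.$ The two $w_{x_2}$ factors then cancel, leaving the clean formula $b_{x_2}(x)=D(w(x))$ on the interior of $R_4.$ (Equivalently, one can observe that $b$ on each chord is the double-tangent plane to the graph of $|s|^p$ over $\Gamma_1,$ whose $x_2$-slope is exactly $p[w^{p-1}+(-v)^{p-1}]/[e^w-e^v]=D(w).$)

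By the chain rule, $b_{x_2x_2}=D'(w)\,w_{x_2},$ and the proof reduces to showing that both factors are positive. Lemma~\ref{lemma_d} already gives $D'>0$ on $(0,\bcr)\subset(0,\tfrac{p-2}{3p}).$ For $w_{x_2},$ the same chord parametrization yields
$$
\frac{\partial x_2}{\partial w}\bigg|_{x_1}=(1-t)\bigl(e^w-r(v,w)\bigr)+t\,v'(w)\bigl(e^v-r(v,w)\bigr).
$$
Strict convexity of $e^s$ gives $e^w-r>0$ and $e^v-r<0,$ so the sign of the right-hand side is governed by the sign of $v'(w).$ Thus $w_{x_2}>0$ will follow once $v'(w)<0$ is established on $(0,\bcr),$ which by implicit differentiation $v'=-F_w/F_v$ reduces to checking $F_w>0$ (the sign of $F_v$ is already known from the proof of Lemma~\ref{lemma_lunka}).

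The main obstacle I anticipate is the verification of $F_w>0.$ The computation parallels that of $F_v$ in Lemma~\ref{lemma_lunka}, but in place of~\eqref{ineq_lunka1} one must invoke~\eqref{ineq_lunka2}, whose range of validity is $0<w\le(p-2)/(p-1).$ The working hypothesis $w<\bcr<(p-2)/(3p)$ from Lemma~\ref{lemma_tr} is well within this range, so~\eqref{ineq_lunka2} applies. Combining everything, $b_{x_2x_2}=D'(w)\,w_{x_2}>0$ throughout the interior of $R_4,$ and local convexity follows.
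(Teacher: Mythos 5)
Your strategy coincides with the paper's in all its main steps: write $b(x)=q(v,w)(x_1-w)+w^p$ along the chord foliation, compute $b_{x_2}=D(w)$ via~\eqref{form}, deduce $b_{x_1x_1}b_{x_2x_2}=b_{x_1x_2}^2$ and $b_{x_2x_2}=D'(w)\,w_{x_2},$ invoke $D'>0$ from Lemma~\ref{lemma_d}, and show $w_{x_2}>0.$ The geometric remark that on each chord the graph of $b$ lies in the common tangent plane to the boundary curve $s\mapsto\big(s,e^s,|s|^p\big)$ at its two endpoints, with $x_2$-slope $D(w),$ is a nice reformulation of the algebraic identity the paper uses, and is worth keeping. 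Where you go further than the paper is in the positivity of $w_{x_2},$ which the paper simply declares ``clear from geometry'': you reduce it, via the chord parametrization, to the sign of $v'(w),$ and this is a legitimate way to make that step rigorous.

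However, there is a sign error in that reduction. Since $v'=-F_w/F_v$ and Lemma~\ref{lemma_lunka} already gives $F_v<0,$ the condition $v'(w)<0$ is equivalent to $F_w<0,$ not $F_w>0$ as you state. (With $F_w>0$ you would get $v'>0,$ which would make your second term in $\partial x_2/\partial w\big|_{x_1}$ negative and leave the sign undetermined.) The computation you anticipate does in fact give the right sign: one can check the identity
\begin{align*}
F_w(v,w)&=\big(e^w-e^v(1+w-v)\big)\,e^w\left(\frac{w^p-(-v)^p+p(w-v)(-v)^{p-1}}{e^w-e^v(1+w-v)}-p(p-1)w^{p-2}e^{-w}\right),
\end{align*}
whose first factor is positive by strict convexity of $e^t.$ On the root set $F(v,w)=0,$ the fraction in the second factor equals $\dfrac{q(v,w)+p(-v)^{p-1}}{r(v,w)-e^v}=D(w),$ and then~\eqref{ineq_d2} (equivalently,~\eqref{ineq_lunka2}, valid because $0<w<\tfrac{p-2}{3p}<\tfrac{p-2}{p-1}$) makes the second factor negative. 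Hence $F_w<0,$ so $v'<0$ and $w_{x_2}>0$ as required. Once you correct the claimed sign from $F_w>0$ to $F_w<0$ and actually carry out this factorization, your argument is complete and supplies the small piece the paper leaves to geometric intuition.
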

\begin{proof}
In $R_4,$ $b$ is given by
$$
b(x)=q(v,w)(x_1-w)+f(w),\qquad x_2=r(v,w)(x_1-w)+g(w),
$$
where, as in the proof of Lemma~\ref{lemma_d}, we write $f(w)=w^p$ and $g(w)=e^w.$ Let us also, as we did there, use the prime to indicate the total derivative with respect to $w.$

To show that $b$ is convex, we show that $b_{x_1x_1}b_{x_2x_2}=b_{x_1x_2}^2$ and $b_{x_2x_2}>0$ in the interior of $R_4.$ \cre{Differentiating gives}
$$
w_{x_1}=\frac{-r}{r'(x_1-w)-r+g'}, \qquad w_{x_2}=\frac1{r'(x_1-w)-r+g'},
$$
and
\cre{
\begin{align*}
b_{x_1}&=\big[q'(x_1-w)-q+f'\big]w_{x_1}+q=-r\,\frac{q'(x_1-w)-q+f'}{r'(x_1-w)-r+g'}+q
=-rD+q,
\end{align*}
}
where \cre{we used}~\eqref{form}.
Similarly,
\eq[bx2short]{
\cre{b_{x_2}=\big[q'(x_1-w)-q+f'\big]w_{x_2}=\frac{q'(x_1-w)-q+f'}{r'(x_1-w)-r+g'}=D.}
}
Therefore,
$$
b_{x_1x_1}=-rD'w_{x_1},\quad b_{x_1x_2}=-rD'w_{x_2},\quad
b_{x_2x_2}=D'w_{x_2},
$$
and, since, $w_{x_1}=-rw_{x_2},$ we see that $b_{x_1x_1}b_{x_2x_2}=b_{x_1x_2}^2.$

Furthermore, since by Lemma~\ref{lemma_d}, \cre{$D'>0$} and since it is clear from geometry that $w_{x_2}>0,$ we have $b_{x_2x_2}>0,$ which completes the proof.
\end{proof}

To finish the proof of Lemma~\ref{lemma_conv}, we need to verify that $b_{x_2}$ is increasing \cre{in $x_2$} across boundaries between subdomains. We can write this requirement symbolically as:
$$
b_{x_2}\Big|^{}_{R_1, u=\bcr}\le b_{x_2}\Big|_{R_2},\qquad b_{x_2}\Big|^{}_{R_4, w=\bcr}\le b_{x_2}\Big|_{R_2},
\qquad b_{x_2}\Big|_{R_2}\le b_{x_2}\Big|^{}_{R_3, u=\acr}.
$$
In fact, all three statements hold with equality (which implies that $b$ is of class  $C^1$ in the interior of $\Omega_C,$ though we will not use this fact). 

From~\eqref{bx2short}, we have $b_{x_2}\cre{\big|^{}_{R_4, w=\bcr}}=D(\bcr).$ 
Using, in order,~\eqref{bx2},~\eqref{de},~\eqref{km1}, integration by parts, 
and~\eqref{trolleybus} gives:
\eq[use]{
b_{x_2}\Big|^{}_{R_1, u=\bcr}=m_1'(\bcr)e^{-\bcr}(1-\xi)=\frac1\xi(1-\xi)p(p-1)e^{\bcr(1/\xi-1)}\int_{\bcr}^\infty s^{p-2}e^{-s/\xi}\,ds=D(\bcr).
}
A very similar calculation, but using~\eqref{m3} in place of~\eqref{km1}, gives 
$b_{x_2}\big|^{}_{R_3, u=\acr}=D(\bcr).$

Finally, 
$$
b_{x_2}\Big|_{R_2}=\frac{m_1(\bcr)-q(\acr,\bcr)}{k(\bcr)-r(\acr,\bcr)}.
$$
By \eqref{de} and~\eqref{use},
$$
m_1(\bcr)=\xi m_1'(\bcr)+p\bcr^{p-1}=\frac{\xi}{1-\xi}\,e^{\bcr}D(\bcr)+p\bcr^{p-1}.
$$
Therefore,
$$
b_{x_2}\Big|_{R_2}=\frac{\frac{\xi}{1-\xi}\,e^{\bcr}D(\bcr)+p\bcr^{p-1}-q(\acr,\bcr)}{\frac1{1-\xi}\,e^{\bcr}-r(\acr,\bcr)}
=\frac{\frac{\xi}{1-\xi}\,e^{\bcr}D(\bcr)+\big(e^{\bcr}-r(\acr,\bcr)\big)D(\bcr)}{\frac1{1-\xi}\,e^{\bcr}-r(\acr,\bcr)}=D(\bcr).
$$
The proof is complete.
\end{proof}
We are now in a position to prove the main theorems stated in Section~\ref{main_results}.

\section{Proofs of Theorems~\ref{t1} and~\ref{t2.5}}
\label{nonB}
\noindent We will need two auxiliary results from~\cite{p<2}.

For $p>0,$ let
$$
\omega(p)=\left[\frac pe\Big(\Gamma(p)-\int_0^1t^{p-1}e^t\,dt\Big)+1\right]^{1/p}.
$$
\begin{lemma}[\cite{p<2}]
\label{phi0}
Let $\varphi_0(t)=\log(1/t),~t\in(0,1).$ Then
\eq[eps_phi]{
\ve_{\varphi_0}=1,\qquad \ve_{-\varphi_0}=\infty.
}
If $p\ge1,$ then
\eq[norm]{
\|\varphi_0\|_{\BMO^p((0,1))}=\omega(p).
}
Consequently,
\eq[norm1]{
\ve_0(p)\le \omega(p).
}
\end{lemma}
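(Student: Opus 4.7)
The plan is to prove the three assertions essentially independently, with~\eqref{norm1} a direct consequence of~\eqref{eps_phi} and~\eqref{norm}. The chief technical obstacle is the upper bound on $\|\varphi_0\|_{\BMO^p}$ in~\eqref{norm}; the lower bound and the $A_\infty$ computations reduce to explicit one-variable calculations.

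For~\eqref{eps_phi}, I would compute $[e^{\pm\ve\varphi_0}]_{A_\infty(0,1)}$ directly. Since $e^{\ve\varphi_0}(t)=t^{-\ve}$ is not integrable on $(0,1)$ for $\ve\ge 1$, we automatically have $\ve_{\varphi_0}\le 1$. For $\ve<1$ and any $J=(a,b)\subset(0,1)$, the substitution $t=(b-a)(\tau+\alpha)$ with $\alpha=a/(b-a)\ge 0$ reduces $\av{t^{-\ve}}_J\, e^{\ve\av{\log t}_J}$ to a function $G(\alpha)$ of the shape parameter alone, with $G(0)=\frac{e^{-\ve}}{1-\ve}$ and $G(\alpha)\to 1$ as $\alpha\to\infty$. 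A derivative check (or a Chebyshev-type correlation argument of the kind used below) shows $G$ is decreasing, so its supremum equals $G(0)<\infty$, whence $\ve_{\varphi_0}=1$. The analogous reduction applied to $w=t^{\ve}$ produces a bounded shape function for every $\ve>0$, so $\ve_{-\varphi_0}=\infty$.

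For~\eqref{norm}, the lower bound is direct: on $J=(0,r)$, using $\av{\log t}_{(0,r)}=\log r-1$ and substituting $s=\log(r/t)$ yield
\[
\av{|\varphi_0-\av{\varphi_0}_J|^p}_J=\int_0^\infty |s-1|^p e^{-s}\,ds,
\]
independently of $r$. Splitting at $s=1$, using $u=1-s$ and $u=s-1$ on the two halves, and then invoking the identity $\int_0^1 u^p e^u\,du=e-p\int_0^1 u^{p-1}e^u\,du$ from one integration by parts reproduces exactly $\omega(p)^p$, so $\|\varphi_0\|_{\BMO^p}\ge\omega(p)$. For the matching upper bound, the same substitution $t=(b-a)(\tau+\alpha)$ transforms the seminorm integrand on an arbitrary $J=(a,b)$ into
\[
F(\alpha)=\int_0^1\big|\log(\tau+\alpha)-M(\alpha)\big|^p d\tau,\qquad M(\alpha)=(1+\alpha)\log(1+\alpha)-\alpha\log\alpha-1,
\]
depending only on $\alpha\in[0,\infty)$. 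The problem reduces to showing $F$ is monotone decreasing, whence $\sup_J F=F(0)=\omega(p)^p$. I would differentiate under the integral to get $F'(\alpha)=p\int_0^1 |g|^{p-1}\sign(g)\,h\,d\tau$, where $g(\tau)=\log(\tau+\alpha)-M(\alpha)$ is strictly increasing and $h(\tau)=(\tau+\alpha)^{-1}-M'(\alpha)$ is strictly decreasing on $(0,1)$, with $\int_0^1 g=\int_0^1 h=0$. Since $x\mapsto|x|^{p-1}\sign(x)$ is strictly increasing for $p>1$, the first factor is strictly increasing in $\tau$, and Chebyshev's correlation inequality applied to this pair of oppositely-monotone functions gives
\[
F'(\alpha)\le p\Bigl(\int_0^1|g|^{p-1}\sign(g)\,d\tau\Bigr)\Bigl(\int_0^1 h\,d\tau\Bigr)=0,
\]
with strict inequality by strict monotonicity. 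This monotonicity check is the main technical step of the lemma.

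Finally,~\eqref{norm1} follows by rescaling: $\tilde\varphi_0=\varphi_0/\omega(p)$ has $\|\tilde\varphi_0\|_{\BMO^p}=1$ by~\eqref{norm}, and $\ve_{\tilde\varphi_0}=\omega(p)\,\ve_{\varphi_0}=\omega(p)$ by~\eqref{eps_phi}; since $\ve_0(p)=\inf\{\ve_\varphi:\|\varphi\|_{\BMO^p}=1\}$, we conclude $\ve_0(p)\le\omega(p)$.
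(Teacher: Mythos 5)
Your proof is correct and self-contained. Note that the paper itself does not prove this lemma; it is imported verbatim from~\cite{p<2}, so there is no ``paper's own proof'' to compare against here. That said, your route is the natural elementary one, and each step holds up. The non-integrability of $t^{-\ve}$ for $\ve\ge1$ gives $\ve_{\varphi_0}\le1$; for $\ve<1$ your reduction to the shape function $G(\alpha)$ is clean, and the monotonicity of $G$ follows from exactly the kind of Chebyshev correlation you describe (one can also simply observe that $G$ extends continuously to $[0,\infty]$ with finite endpoint values, which already gives $[t^{-\ve}]_{A_\infty}<\infty$ and the analogous bound for $t^{\ve}$, though that would not identify the exact characteristic). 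Your computation of $\int_0^\infty|s-1|^pe^{-s}\,ds$ via splitting at $s=1$ and one integration by parts does reproduce $\omega(p)^p$ exactly, and this is what identifies $F(0)$. The heart of the argument is the monotonicity of $F(\alpha)$: differentiating under the integral is legitimate for $\alpha>0$ where the integrand is smooth and bounded, $g$ and $h$ have zero mean, $g$ is increasing and $h$ is decreasing in $\tau$, and since $x\mapsto|x|^{p-1}\sgn(x)$ is non-decreasing (strictly for $p>1$), $\phi\circ g$ and $h$ are oppositely ordered, so Chebyshev gives $F'(\alpha)\le 0$. This yields $\sup_{J\subset(0,1)}\av{|\varphi_0-\av{\varphi_0}_J|^p}_J=F(0)=\omega(p)^p$. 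Finally, the rescaling argument for $\ve_0(p)\le\omega(p)$ is exactly right, using $\ve_{\lambda\varphi}=\ve_\varphi/\lambda$ and the characterization of $\ve_0(p)$ as the infimum of $\ve_\varphi$ over unit-norm $\varphi$.

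One small stylistic remark: for $p=1$ the factor $|g|^{p-1}\sgn(g)=\sgn(g)$ is only non-decreasing, so the Chebyshev step gives $F'\le0$ rather than strict inequality; this is enough, since you only need $\sup F=F(0)$, not uniqueness of the maximizer. You might also note explicitly that the substitution $t=(b-a)(\tau+\alpha)$ makes the $\BMO$ integrand scale-invariant, so $F$ genuinely depends on $\alpha$ alone and the range $\alpha\in[0,\infty)$ sweeps all subintervals of $(0,1)$ up to this scaling.
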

\begin{lemma}[\cite{p<2}]
 \label{cont}
 Let $\varphi$ be a non-constant $\BMO$ function. For $\ve\in[0,\ve_\varphi),$ let $F(\ve)=[e^{\ve\varphi}]_{A_\infty}.\!\!$ Then $F$ is a strictly increasing, continuous function on $[0,\ve_\varphi),$ and $\lim_{\ve\to\ve_\varphi}F(\ve)\!=\!\infty.$
 \end{lemma}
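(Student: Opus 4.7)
The plan is to view $F$ as the pointwise supremum over intervals $J \subset Q$ of the auxiliary functions
$$g_J(\ve) = \av{e^{\ve\varphi}}{J}\, e^{-\ve\av{\varphi}{J}},$$
and to first show that each $g_J$ is convex and non-decreasing on $[0, \ve_\varphi)$. The identities $\log g_J(0) = 0$ and $(\log g_J)'(0) = \av{\varphi}{J} - \av{\varphi}{J} = 0$ are immediate, while Cauchy--Schwarz applied to $\varphi\, e^{\ve\varphi/2}$ and $e^{\ve\varphi/2}$ gives $(\log g_J)''(\ve) \ge 0$. Thus $\log g_J$ is convex with a minimum at $0$, so $g_J = \exp(\log g_J)$ is itself convex and non-decreasing on $[0, \ve_\varphi)$. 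The supremum $F$ therefore inherits both properties; convex functions are automatically continuous on the interior of their domain, and the chord estimate $F(\ve) - F(0) \le (\ve/\ve_0)\bigl(F(\ve_0) - F(0)\bigr)$ together with $F \ge F(0) = 1$ gives right-continuity at $\ve = 0$.

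For strict monotonicity, I would argue by contradiction. If $F(\ve_1) = F(\ve_2)$ for some $0 \le \ve_1 < \ve_2 < \ve_\varphi$, monotonicity forces $F \equiv F(\ve_1)$ on $[\ve_1, \ve_2]$, and the chord-slope monotonicity of the convex function $F$ (applied at the points $0, \ve_1, \ve_2$) then forces $F(\ve_1) = F(0) = 1$. Hence $g_J(\ve) \le 1$ for every interval $J \subset Q$ and every $\ve \in (0, \ve_2]$. Jensen's inequality yields the reverse inequality $g_J(\ve) \ge 1$, so equality must hold; since $t \mapsto e^t$ is strictly convex, taking $J = Q$ shows that $\varphi$ is a.e.\ constant on $Q$, contradicting the non-constancy hypothesis.

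The remaining statement $\lim_{\ve \to \ve_\varphi^-} F(\ve) = \infty$ is the step I expect to be the main obstacle. Assuming for contradiction that $F$ is bounded by some $L < \infty$ on $[0, \ve_\varphi)$, I would send $\ve \to \ve_\varphi^-$ in the inequality $\av{e^{\ve\varphi}}{J} \le L\, e^{\ve\av{\varphi}{J}}$; monotone convergence on $\{\varphi > 0\}$ and dominated convergence on $\{\varphi \le 0\}$ (with dominator $1$) yield $[e^{\ve_\varphi\varphi}]_{A_\infty(Q)} \le L$, so $e^{\ve_\varphi\varphi} \in A_\infty(Q)$. To contradict the definition~\eqref{ephi} of $\ve_\varphi$, one still needs to produce some $\delta > 0$ with $e^{(\ve_\varphi + \delta)\varphi} \in A_\infty(Q)$; this is the openness (self-improvement) property of the $A_\infty$ class, a classical consequence of the reverse Hölder inequality for $A_\infty$ weights in one dimension. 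Invoking this external fact is what closes the argument.
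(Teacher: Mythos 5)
The paper itself does not prove this lemma; it imports it from~\cite{p<2}, so there is no in-paper argument against which to compare yours line by line. Judged on its own, your proof is correct and takes the natural route. The key structural observation --- that $\log g_J$ is convex with $(\log g_J)'(0)=0$, so each $g_J$ and therefore $F=\sup_J g_J$ is convex and nondecreasing on $[0,\ve_\varphi)$ --- does give continuity on the open interval for free, and the chord estimate you use handles right-continuity at $0$. Your contradiction for strict monotonicity (constant $F$ on a subinterval plus chord-slope monotonicity forces $F\equiv 1$, then equality in Jensen with $J=Q$ forces $\varphi$ constant) is sound. For the blow-up you correctly isolate the genuine difficulty: passing to the limit by monotone and dominated convergence yields only $e^{\ve_\varphi\varphi}\in A_\infty(Q)$, which does not by itself contradict $\ve_\varphi$ being a supremum, and the openness of $A_\infty$ (reverse H\"older self-improvement, a classical fact on intervals, logically independent of the sharp constants pursued in this paper) is exactly what pushes past $\ve_\varphi$ and closes the argument, using $\ve_\varphi>0$, which holds by John--Nirenberg since $\varphi$ is non-constant. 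One small point you could make explicit: the second-derivative computation for $\log g_J$ uses $\varphi^2 e^{\ve\varphi}\in L^1(J)$; this follows because $e^{\ve'\varphi}\in L^1(J)$ for some $\ve'\in(\ve,\ve_\varphi)$ and $t^2 e^{\ve t}\le C_{\ve,\ve'}\bigl(e^{\ve' t}+t^2\bigr),$ with $\varphi\in L^2(J)$ by BMO. This is a one-line remark, not a gap.
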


\begin{proof}[Proof of Theorem~\ref{t1}]
\cre{We use Theorem~\ref{main} with $b_C=b_{p,C}$ given by formula~\ref{b_main}. By Lemma~\ref{direct}, $b_C\le \bel{b}_{p,C},$ as required.} 

We need to compute $b_{p,C}(0,C).$ Note that by 
\cre{Lemma~\ref{lemma_tr}}, $\bcr<\xi,$ and, thus, $\acr>-\bcr>-\xi$ by Lemma~\ref{lemma_lunka}. Therefore, the point $(0,C)$ is in subdomain $R_3$ and, since $u(0,C)=-\xi,$
$$
b_{p,C}(0,C)=m_3(-\xi)\xi+\xi^p.
$$
Now, $m_3(-\xi)$ is given by~\eqref{m3}:
$$
m_3(-\xi)=-\frac p\xi\,e^{-1}\int_{-\xi}^{\acr} (-s)^{p-1} e^{-s/\xi}\,ds+e^{(-\xi-\acr)/\xi}\left(e^{\acr-\bcr}\big(m_1(\bcr)-p\bcr^{p-1}\big)-p(-\acr)^{p-1}\right).
$$
By \cre{Lemma~\ref{lemma_tr}}, $\bcr\in(0,c_2)$ with $c_2\to 0$ as $\xi\to1.$ By Lemma~\ref{lemma_lunka}, $\acr\in(-\bcr,0).$ Therefore, 
$$
\lim_{\xi\to1}\acr=\lim_{\xi\to1}\bcr=0
$$
and
$$
\lim_{C\to\infty}b_{p,C}(0,C)=\lim_{\xi\to1}\big(m_3(-\xi)\xi+\xi^p\big)=-\frac pe\int_{-1}^0(-s)^{p-1}e^{-s}\,ds+e^{-1}m_1(0)+1=\omega^p(p),
$$
where we used~\eqref{km1}, whereby \cre{$m_1(0)=p\Gamma(p).$}

Hence, by Theorem~\ref{main}, $\ve_0(p)\ge\omega(p),$ 
and Lemma~\ref{phi0} now finishes the proof.
\end{proof}

The proof of Theorem~\ref{t2.5} below is a variation of the argument for Corollary~1.5 in~\cite{p<2}; the proof of sharpness, using function $\varphi_0$ from Lemma~\ref{phi0}, is exactly the same and we omit it.
\begin{proof}[Proof of Theorem~\ref{t2.5}]
Take any $\varphi\in\BMO(Q).$ Without loss of generality, assume $\ve_\varphi<\infty.$ For $\ve\in[0,\ve_\varphi),$ let $F(\ve)=[e^{\ve\varphi}]_{A_\infty(Q)}.$ By Lemma~\ref{cont}, for sufficiently large $\ve$ we have $F(\ve)\ge C_0(p).$ Therefore, for any subinterval $J$ of $Q,$
$$
\av{|\ve\varphi-\av{\ve\varphi}J|^p}J\ge\bel{b}_{p,F(\ve)}\big(0,\av{e^{\ve\varphi-\sav{\ve\varphi}J}}J\big)\ge
b_{p,F(\ve)}\big(0,\av{e^{\ve\varphi-\sav{\ve\varphi}J}}J\big).
$$
Take a sequence $\{J_n\}$ such that $\av{e^{\ve\varphi-\sav{\ve\varphi}{J_n}}}{J_n}\to F(\ve).$ Since the left-hand side is bounded from above by $\ve^p\|\varphi\|^p_{\BMO^p(Q)},$ we have
$$
\ve^p\|\varphi\|^p_{\BMO^p(Q)}\ge b_{p,F(\ve)}\big(0,F(\ve)\big).
$$
Now, take $\ve\to\ve_\varphi$ (and, thus, $F(\ve)\to\infty).$ This gives
$$
\ve^p_\varphi \|\varphi\|^p_{\BMO^p(Q)}\ge \ve^p_0(p).
$$
Take any $f\in L^\infty(Q),$ then $\ve_{\varphi-f}=\cre{\ve_\varphi}.$ Thus, we can replace $\varphi$ with $\varphi-f$ above, which gives
$$
\|\varphi-f\|_{\BMO^p(Q)}\ge \frac{\ve_0(p)}{\ve_\varphi}.
$$
The same inequality holds with $\varphi$ replaced with $-\varphi,$ and it remains to take the infimum over $f\in L^\infty(Q)$ on the left.
\end{proof}

\section{Optimizers and the converse inequality}
\label{optimizers}
In this section, we complete the proof of Theorem~\ref{main_bellman} by proving Lemma~\ref{converse}. To that end, we present a set of special test functions on the interval $(0,1)$ that realize the infimum in definition~\eqref{bh} of the Bellman function $\bel{b}_{p,C}.$

Without loss of generality assume $C>1.$ Let $Q=(0,1).$ Recall the Bellman candidate $b_{p,C}$ given by formula~\eqref{b_main}. For $x\in\Omega_C,$ we say that a function $\varphi_x$ on $Q$ is an optimizer for $b_{p,C}$ at $x$ if
\eq[optimizer1]{
\varphi_x\in E_{x,C,Q}\quad\text{and}\quad \av{|\varphi_x|^p}{Q}=b_{p,C}(x),
}
where the set of test functions $E_{x,C,Q}$ is defined by~\eqref{adm}. Observe that if we have such a function $\varphi_x$ for all $x\in\Omega_C,$ then
$$
b_{p,C}(x)=\av{|\varphi|^p}Q\ge \bel{b}_{p,C}(x),
$$
which is the statement of Lemma~\ref{converse}.

Our optimizers $\varphi_x$ will have different forms depending on the location of $x$ in 
$\Omega_C.$ Specifically, we will have a different optimizer for each of the four subdomains $R_k$ of 
$\Omega_C$ defined by formula~\eqref{domain} and pictured in Figure~\ref{fig_p>2}. We do not discuss the construction of these optimizers, but simply give formulas for them. A reader interested in where they come from is invited to consult papers~\cite{sv1} and~\cite{iosvz2}, where a number of similar constructions are carried out in the context of $\BMO^2.$

For each $x\in R_1,$ let
\eq[opt_r1]{
\varphi_x(t)=u +\xi\log\big({\textstyle\frac{\alpha}t}\big)\chi^{}_{(0,\alpha)}(t),
}
where $u=u(x)$ is defined by~\eqref{udef} and we set
\eq[opt_r1a]{
\alpha=\frac{x_1-u}{\xi}.
}
(This optimizer was defined in Section~5 of~\cite{p<2} under the name $\varphi^+_x$.)

Now consider the subdomain $R_2.$ Let us give names to its four corners, clockwise from top right:
$$
X=(\bcr+\xi,e^{\bcr+\xi}),\qquad Y=(\bcr,e^{\bcr}),\qquad Z=(\acr,e^{\acr}),\qquad W=(\acr+\xi,e^{\acr+\xi}).
$$
We already know the optimizers for the points $X,$ $Y,$ and $Z:$ the first comes from formula~\eqref{opt_r1} (which applies since $X\in R_1\cap R_2$) with $\alpha=1;$ the other two are trivial, since for each $x\in\Gamma_1,$ the set $E_{x,C,Q}$ contains only one element --- the constant function $\varphi(t)=x_1.$ Therefore, we define, for all $t\in Q,$
$$
\varphi^{}_X(t)=\xi\log\big({\textstyle\frac1t}\big),\qquad \varphi^{}_Y(t)=\bcr,\qquad \varphi^{}_Z(t)=\acr.
$$
We now use these three optimizers to define $\varphi_x$ for every $x\in R_2.$ Observe that $R_2$ is contained in the triangle with the vertices $X,$ $Y,$ $Z.$ This means that every $x$ in $R_2$ has a unique representation as a convex combination of these three points. Thus, there are non-negative numbers $\alpha_1,$ $\alpha_2,$ and  $\alpha_3$ such that $\alpha_1+\alpha_2+\alpha_3=1$ and
\eq[cc]{
x=\alpha_1 X+\alpha_2 Y+\alpha_3 Z.
}
To obtain $\varphi_x,$ we concatenate $\varphi^{}_X,$ $\varphi^{}_Y,$ and $\varphi^{}_Z$ in the appropriate proportion:
$$
\varphi_x(t)=\varphi^{}_X\big({\textstyle\frac t{\alpha_1}}\big)\chi^{}_{(0,\alpha_1)}(t)+
\varphi^{}_Y\big({\textstyle\frac {t-\alpha_1}{\alpha_2}}\big)\chi^{}_{(\alpha_1,\alpha_1+\alpha_2)}(t)+
\varphi^{}_Z\big({\textstyle\frac {t-\alpha_1-\alpha_2}{\alpha_3}}\big)\chi^{}_{(\alpha_1+\alpha_2,1)}(t),
$$
or, equivalently,
\eq[opt_r2]{
\varphi_x(t)=\bcr\,\chi^{}_{(0,\alpha_1+\alpha_2)}(t)+\xi\log\big({\textstyle\frac{{\alpha_1}}t}\big)\chi^{}_{(0,\alpha_1)}(t)+\acr\,\chi^{}_{(\alpha_1+\alpha_2,1)}(t),
}
with $\alpha_k=\alpha_k(x)$ defined by~\eqref{cc}.

This formula applies, in particular, to the fourth corner of $R_2,$ i.e., the point $W.$ Since that point also lies in the subdomain $R_3,$ the optimizer $\varphi^{}_W$ will enter into the definition of $\varphi_x$ for all $x\in R_3.$ Specifically, for each such $x$ we set:
\eq[opt_r3]{
\varphi_x(t)=\varphi^{}_W\big({\textstyle\frac t{\tau\alpha}}\big)\chi^{}_{(0,\tau\alpha)}(t)+
\xi\log\big({\textstyle\frac{\alpha}t}\big)\chi^{}_{(\tau\alpha,\alpha)}(t)+u\chi^{}_{(\tau\alpha,1)}(t).
}
Here, $u$ is defined by~\eqref{udef}, $\alpha$ is defined by~\eqref{opt_r1a}, and we also set
\eq[opt_r3t]{
\tau=e^{(u-\acr)/\xi}.
}

It remains to define $\varphi_x$ when $x\in R_4.$ Recall the two auxiliary functions $v=v(x)$ and $w=w(x)$ defined by Lemma~\ref{lemma_lunka_x} (see Figure~\ref{fig_lunka}). Every point $x\in R_4\setminus\Gamma_1$ lies on the line segment connecting the points $(v,e^v)$ and $(w,e^w).$ Accordingly, we define $\varphi_x$ to be the appropriate concatenation of the two consant optimizers corresponding to those points:
\eq[opt_r4]{
\varphi_x(t)=w\,\chi^{}_{(0,\beta)}(t)+v\,\chi^{}_{(\beta,1)}(t),
}
where we set
\eq[opt_r4b]{
\beta=\frac{x_1-v}{w-v}.
}
We now state the main lemma, which immediately yields Lemma~\ref{converse}
\begin{lemma}
\label{main_opt}
Let $\varphi_x$ be defined by~\eqref{opt_r1} and~\eqref{opt_r1a} for $x\in R_1;$ by~\eqref{cc} and~\eqref{opt_r2} for $x\in R_2;$ by~\eqref{opt_r3},~\eqref{opt_r1a}, and~\eqref{opt_r3t} for $x\in R_3;$ and by~\eqref{opt_r4} and~\eqref{opt_r4b} for $x\in R_4.$ Then $\varphi_x$ is an optimizer for $b_{p,C}$ at every $x\in\Omega_C.$
\end{lemma}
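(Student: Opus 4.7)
The plan is to verify, for each subdomain $R_k$ separately, the four defining properties of an optimizer at $x$: the averages $\langle \varphi_x \rangle_Q = x_1$ and $\langle e^{\varphi_x} \rangle_Q = x_2$, the $A_\infty$ bound $[e^{\varphi_x}]_{A_\infty(Q)} \le C$, and the value identity $\langle |\varphi_x|^p \rangle_Q = b_{p,C}(x)$. Once all of these are established for every $x \in \Omega_C$, the bound $\bel{b}_{p,C}(x) \le \av{|\varphi_x|^p}Q = b_{p,C}(x)$ is immediate.

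I would first handle $R_1$, where the argument of \cite{p<2} essentially applies. Direct integration with $\alpha = (x_1 - u)/\xi$, using $\int_0^\alpha \log(\alpha/t)\,dt = \alpha$ and $\int_0^\alpha (\alpha/t)^\xi\,dt = \alpha/(1-\xi)$, yields the average identities, the key point being $\langle e^{\varphi_x} \rangle = e^u + k(u)(x_1-u) = x_2$ by \eqref{udef}. For the $A_\infty$ bound, on any subinterval $(0,s) \subset (0, \alpha)$ the defining equation $e^{-\xi} = C(1-\xi)$ makes the ratio $\langle e^{\varphi_x} \rangle_J e^{-\langle \varphi_x \rangle_J}$ equal exactly $C$, with every other subinterval giving a smaller ratio. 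The value identity follows, via the substitution $s = u + \xi\log(\alpha/t)$ and one integration by parts, from the formula for $m_1$ in \eqref{km1}. I would then dispose of $R_4$, the simplest case: the two-step function has averages $\beta w + (1-\beta)v = x_1$ and $\beta e^w + (1-\beta)e^v = x_2$ by the chord representation from Lemma \ref{lemma_lunka_x}, and the value $\beta w^p + (1-\beta)(-v)^p = q(v,w)(x_1-w) + w^p$ follows by rearrangement. The $A_\infty$ characteristic of a two-step weight with jump $h = w-v$ reduces to $\max_\tau (\tau e^h + 1-\tau)e^{-\tau h}$, which stays below $C$ because Lemmas \ref{lemma_tr} and \ref{lemma_lunka} give $h < 2c_2$, and $c_2$ is very small for $C > C_0(p)$.

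Next I would take up $R_2$, where $\varphi_x$ is a concatenation of rescaled optimizers at the corners $X, Y, Z$ on intervals of lengths $\alpha_1, \alpha_2, \alpha_3$ given by \eqref{cc}. The standard rescaling principle makes the averages of a concatenation the convex combinations of the piece averages, so since the $(\langle \varphi\rangle, \langle e^\varphi\rangle)$-pairs at $X, Y, Z$ equal those three points (from the $R_1$ analysis at $X$, and trivially for the constant optimizers at $Y, Z$), the averages of $\varphi_x$ equal $\alpha_1 X + \alpha_2 Y + \alpha_3 Z = x$. The same principle yields $\langle |\varphi_x|^p \rangle = \alpha_1 b(X) + \alpha_2 b(Y) + \alpha_3 b(Z)$; this equals $b(x)$ because the formula defining $b_{p,C}$ on $R_2$ is manifestly affine in $x$ and matches $|x_1|^p$ at $Y, Z$ (using $r(\acr,\bcr)(\acr - \bcr) = e^{\acr} - e^{\bcr}$) and the $R_1$-value at $X$. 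The $A_\infty$ bound follows from the piece characteristics plus the geometric fact that the chords $XY$ and $YZ$ lie inside $\Omega_C$, so crossing subintervals cannot take their averaged pair outside $\Omega_C$.

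Finally, I would treat $R_3$, which combines a rescaled $\varphi_W$ on $(0, \tau\alpha)$, a tangent piece on $(\tau\alpha, \alpha)$, and a constant $u$ on $(\alpha, 1)$, with $\tau$ chosen by \eqref{opt_r3t} so that the intermediate averages land in the right places. Averages and $A_\infty$ follow by splitting and reapplying the $R_1$ and $R_2$ arguments to each piece; the value identity exploits the definition of $m_3$ in \eqref{m3}, which was engineered so that the tangent-piece contribution (via the $R_1$-style substitution) combined with $\langle |\varphi_W|^p \rangle = b(W)$ from $R_2$ produces the formula $m_3(u)(x_1-u)+(-u)^p$. The main obstacle throughout will be the careful $A_\infty$ verification for the concatenated optimizers in $R_2$ and $R_3$: one must show that on every subinterval crossing a partition point the pair $(\langle \varphi \rangle_J, \langle e^{\varphi} \rangle_J)$ remains in $\Omega_C$, which reduces to checking that each chord between consecutive piece-average points lies inside $\Omega_C$. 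This geometric fact is a consequence of the placement of $X, Y, Z, W$ determined by $\bcr, \acr$ via Lemmas \ref{lemma_tr} and \ref{lemma_lunka}, but the bookkeeping will be the most tedious part of the argument.
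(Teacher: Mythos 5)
Your proposal is correct and follows the same approach the paper takes by reference: verify, for each subdomain $R_k$, that the piecewise-defined $\varphi_x$ has the prescribed first two averages, $A_\infty$-characteristic at most $C,$ and $\langle|\varphi_x|^p\rangle_Q = b_{p,C}(x),$ using direct integration in $R_1,$ $R_4$ and the rescaling/concatenation calculus in $R_2,$ $R_3$ — exactly what the proof of Lemma~5.2 in~\cite{p<2} does, as the paper indicates. The only point I would sharpen is the $A_\infty$ argument for $R_4$: rather than estimating the two-step ratio directly via ``$h$ small,'' it is cleaner (and uniform in $p$) to observe that any subinterval crossing the jump has its average pair lying on the chord from $(v,e^v)$ to $(w,e^w),$ which is contained in $R_4\subset\Omega_C$ by Lemma~\ref{lemma_lunka_x} — the same chord-containment reduction you correctly identify as the key obstacle in $R_2$ and $R_3.$
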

\begin{remark}
If a point $x$ lies on a boundary shared by two subdomains, $\varphi_x$ seems to be defined by two different formulas. However, as is easy to check, in all cases above, such two formulas give exactly the same function.
\end{remark}
The proof of this lemma is very similar to the proof of Lemma~5.2 from~\cite{p<2} and we leave it to the reader.


\begin{thebibliography}{99}
\bibitem{gj}
J. Garnett, P. Jones. The distance in $\BMO$ to $L^\infty.$ {\it Annals of Mathematics.} Vol.~108 (1978), pp.~373--393
\bibitem{iosvz1}
P.~Ivanishvili, N.~Osipov, D.~ Stolyarov, V.~Vasyunin, P.~Zatitskiy, On Bellman function for extremal problems in BMO. {\it C. R. Math. Acad. Sci. Paris} 350 (2012), No.~11-12, pp.~561--564
\bibitem{iosvz2}
P. Ivanishvili, N. Osipov, D. Stolyarov, V.Vasyunin, P. Zatitskiy. Bellman function for
extremal problems in BMO. To appear in {\it Transactions of the AMS}, pp.~1-91. Available at arXiv:1205.7018
\bibitem{isvz}
P. Ivanishvili, D. Stolyarov, V.Vasyunin, P. Zatitskiy. Bellman functions on general non-convex planar domains. In preparation.
\bibitem{jn}
F. John, L. Nirenberg. On functions of bounded mean oscillation. {\it Comm. Pure Appl. Math.} 14 1961, pp.~415--426
\bibitem{korenovsky}
A. Korenovskii. The connection between mean oscillations and exact exponents of summability of functions. {\it Math. USSR-Sb.}, Vol. 71 (1992), no.~2, pp.~561--567
\bibitem{lerner}
A.~Lerner. The John--Nirenberg inequality with sharp constants.
{\it C. R. Math. Acad. Sci. Paris}, Vol.~351 (2013), no.~11-12, pp.~463--466
\bibitem{p<2}
L. Slavin. The John--Nirenberg constant of $\BMO^p,$ $1\le p\le 2.$ Submitted. Available at arXiv:1506.04969
\bibitem{sv}
L. Slavin, V. Vasyunin. Sharp results in the integral-form 
John--Nirenberg inequality. {\it Trans. Amer. Math. Soc.}, Vol.~363, No.~8 (2011), pp.~4135--4169
\bibitem{sv1}
L. Slavin, V. Vasyunin. Sharp $L^p$ estimates on BMO. {\it Indiana Univ. Math. J.}, Vol.~61 (2012), No.~3, pp.~1051--1110
\bibitem{sz}
D. Stolyarov, P. Zatitskiy. Theory of locally concave functions and its applications to sharp estimates of integral functionals. Submitted. Available at arXiv:1412.5350v1
\bibitem{vv}
V. Vasyunin, A. Volberg. Sharp constants in the classical weak form of the John--Nirenberg inequality. {\it Proc. Lond. Math. Soc.~(3)}, Vol.~108 (2014), No.~6, pp.~1417--1434
\end{thebibliography}
\end{document}